\newtheorem{theorem}{Theorem}[section]
\newtheorem{lemma}[theorem]{Lemma}
\newtheorem{prop}[theorem]{Proposition}
\newtheorem{problem}[theorem]{Problem}
\theoremstyle{definition}
\newtheorem{defn}[theorem]{Definition}
\newtheorem{cor}[theorem]{Corollary}
\newtheorem{rem}[theorem]{Remark}
\DeclareMathOperator*\cp{Cap}
\DeclareMathOperator*\hdim{dim_{H}}
\DeclareMathOperator*\s{\Sigma}
\DeclareMathOperator*\Om{\Omega}
\DeclareMathOperator*\diam{diam}
\newcommand*\diff{\mathop{}\, d}
\newcommand\numberthis{\addtocounter{equation}{1}\tag{\theequation}}
\numberwithin{equation}{section}
\numberwithin{figure}{section}
\author{Bohdan Bulanyi\footnote{LJLL UMR 7598, Universit\'e de Paris, France. e-mail: bulanyi@math.univ-paris-diderot.fr} }
\date{\today}
\begin{document}
\font\myfont=cmr12 at 18pt
\title{\myfont On the importance of the connectedness assumption in the statement of the optimal $p$-compliance problem}
\maketitle

\begin{abstract}
In this short note we provide a proof of the importance of the connectedness assumption in the statement of the optimal $p$-compliance problem with length penalization and in the statement of the constrained form of this problem for the existence of solutions.
\end{abstract}

\section{Introduction}

A spatial dimension $N\geq 2$ and an exponent $p \in (1,+\infty)$ are given. Let $\Omega$ be an open bounded set in $\mathbb{R}^{N}$ and let $f$ belong to $L^{q_{0}}(\Om)$ with 
\begin{equation} \label{Eq 1.1}
q_{0}=(p^{*})^{\prime}\,\ \text{if}\,\ 1<p<N, \qquad q_{0}>1 \,\ \text{if} \,\ p=N, \qquad q_{0}=1 \,\ \text{if} \,\ p>N,
\end{equation}
where $p^{*}=Np/(N-p)$ and $(1/p^{*})+(1/(p^{*})^{\prime})=1$. In this paper, every open set is nonempty.

In view of the Sobolev embeddings (see, for instance, \cite[Theorem 7.10]{PDE}), the functional  $E_{f,\Omega}$ defined over $W^{1,p}_{0}(\Omega)$ by
\[
 E_{f,\Omega}(u)=\frac{1}{p}\int_{\Omega} |\nabla u|^{p}\diff x - \int_{\Omega} fu \diff x
\]
is finite.  It is a well-known fact that for each closed proper subset $\Sigma$ of $\overline{\Omega}$ the functional $E_{f,\Omega}$ admits a unique minimizer $u_{f,\Omega,\Sigma}$ over $W^{1,p}_{0}(\Omega\backslash \Sigma)$. Furthermore, the Sobolev function $u_{f,\Omega,\Sigma}$ is a unique solution to the Dirichlet problem
\begin{equation*} 
-\Delta_{p}u = f \,\ \text{in}\,\ \Omega\backslash \Sigma,\,\ u \in W^{1,p}_{0}(\Omega\backslash \Sigma). 
\end{equation*}
The latter means that $u_{f,\Omega,\Sigma} \in W^{1,p}_{0}(\Omega\backslash \Sigma)$ and
\begin{equation*} 
\int_{\Om} |\nabla u_{f,\Omega,\Sigma}|^{p-2} \nabla u_{f,\Om,\Sigma} \nabla \varphi \diff x= \int_{\Om}f\varphi \diff x\qquad  \forall \varphi \in W^{1,p}_{0}(\Omega\backslash \Sigma).
\end{equation*}
Notice that if a closed set $\Sigma \subset \overline{\Omega}$ has zero $p$-capacity (for the definition of capacity, see Section 2), then $u_{f,\Omega,\Sigma}=u_{f,\Omega,\emptyset}$ (see Remark~\ref{rem 2.8}). The dependence of $u_{f,\Omega,\Sigma}$ on $p$ is neglected in this paper. For each closed proper subset $\Sigma$ of $\overline{\Omega}$, the $p$-compliance functional at $\Sigma$ is defined by
\[
C_{f,\Omega}(\Sigma)=-E_{f,\Omega}(u_{f,\Omega,\Sigma})=\frac{1}{p^{\prime}}\int_{\Omega}|\nabla u_{f,\Omega,\Sigma}|^{p}\diff x = \frac{1}{p^{\prime}}\int_{\Omega}fu_{f,\Omega,\Sigma}\diff x.
\]

The purpose of this paper is to prove the importance of the connectedness assumption in the statements of the following two shape optimization problems for the existence of solutions to these problems. 
\begin{problem} \label{P 1.1}
	Let $p \in (N-1, +\infty)$. Given $\lambda>0$, find a set $\s \subset \overline{\Om}$ minimizing the functional $\mathcal{F}_{\lambda,f,\Omega}$ defined by
	\[
	\mathcal{F}_{\lambda,f,\Omega}({\s}^{\prime})=C_{f,\Omega}({\s}^{\prime})+ \lambda \mathcal{H}^{1}({\s}^{\prime})
	\]
	among all sets $\Sigma^{\prime}$ in the class $\mathcal{K}(\Omega)$ of all closed  connected proper subsets of $\overline{\Omega}$.
\end{problem}

\begin{problem} \label{P 1.2}
	Let $p \in (N-1, +\infty)$. Given $L>0$, find a set $\s \subset \overline{\Om}$ minimizing the $p$-compliance functional $C_{f,\Omega}$ among all sets $\Sigma^{\prime}$ in the class  $\mathcal{A}_{L}(\Omega)$ of all closed connected subsets of $\overline{\Omega}$ satisfying the constraint $ 0<\mathcal{H}^{1}(\Sigma^{\prime})\leq L.$ 
\end{problem}
These two problems were studied in \cite{Butazzo-Santambrogio,MR3063566,MR3195349,Opt,p-compl,partialregularity}, however, until this paper, the importance of the connectedness of admissible sets in their statements for the existence of solutions had not been proved, but it was simply mentioned as a remark without proof in \cite{Opt} for the special case $N=p=2$. In this paper, we provide a detailed proof in any spatial dimension $N\geq 2$ for every $p\in (N-1,+\infty)$ and for the sharp integrability assumption on the source term $f \in L^{q_{0}}(\Omega)$. 
 
It is worth noting that any closed set $\Sigma^{\prime}\subset\overline{\Omega}$ with $\mathcal{H}^{1}(\Sigma^{\prime})<+\infty$ is removable for the Sobolev space $W^{1,p}_{0}(\Omega)$ if $p\in (1,N-1]$ (see, for instance, Theorem~\ref{thm 2.2} and Remark~\ref{rem 2.8}), namely, $W^{1,p}_{0}(\Omega \backslash \Sigma^{\prime})=W^{1,p}_{0}(\Omega)$ and this implies that $C_{f,\Omega}(\Sigma^{\prime})=C_{f,\Omega}(\emptyset)$. Thus, defining Problem~\ref{P 1.1} for some exponent $p \in (1, N-1]$, we would get only trivial solutions to this problem: every point $x_{0}$ in $\overline{\Omega}$ and the empty set. On the other hand, every set $\Sigma^{\prime}$ in $\mathcal{A}_{L}(\Omega)$ would be a solution to Problem~\ref{P 1.2} if this problem were defined for $p\in (1,N-1]$. Also, if $\Sigma^{\prime}\subset \overline{\Omega}$ is a closed set such that $\Sigma^{\prime}\cap \Omega$ is of Hausdorff dimension one and with finite $\mathcal{H}^{1}$-measure, then $\Sigma^{\prime}$ is not removable for $W^{1,p}_{0}(\Omega)$ if and only if $p\in (N-1,+\infty)$ (see Corollary~\ref{cor 2.4} and Remark~\ref{rem 2.8}). Furthermore, a point $x_{0}\in \Omega$ is not removable for $W^{1,p}_{0}(\Omega)$ if and only if $p\in (N,+\infty)$ (see Theorem~\ref{thm 2.2}, Remark~\ref{rem 2.3} and Remark~\ref{rem 2.8}). Therefore, Problem~\ref{P 1.1} and Problem~\ref{P 1.2} are interesting only in the case when $p\in (N-1,+\infty)$.

We also assume that $f \neq 0$ in $L^{q_{0}}(\Omega)$, because otherwise the $p$-compliance functional $C_{f,\Omega}(\cdot)$ would be reduced to zero, and then each solution to Problem~\ref{P 1.1} would be either a point $x_{0} \in \overline{\Omega}$ or the empty set,  and any set in $\mathcal{A}_{L}(\Omega)$ would be a solution to Problem~\ref{P 1.2}. 

It is clear that replacing the class $\mathcal{K}(\Omega)$ in the statement of Problem~\ref{P 1.1} by the class of all closed connected subsets $\Sigma^{\prime}$ of $\overline{\Omega}$ satisfying the estimate $C_{f,\Omega}(\Sigma^{\prime})+\lambda\mathcal{H}^{1}(\Sigma^{\prime})\leq C_{f,\Omega}(\emptyset)$ does not actually change Problem~\ref{P 1.1}. Thus, from now on we shall talk only about those admissible sets $\Sigma^{\prime}$ for Problem~\ref{P 1.1} for which the estimate $C_{f,\Omega}(\Sigma^{\prime})+\lambda\mathcal{H}^{1}(\Sigma^{\prime})\leq C_{f,\Omega}(\emptyset)$ holds. 

The connectedness assumption, together with the bounds on the length (i.e., $\mathcal{H}^{1}$-measure) of admissible sets for Problem~\ref{P 1.1} and Problem~\ref{P 1.2}, gives the necessary compactness to prove that these two problems admit a solution (see \cite{sverak} for $p=2$ and \cite{Bucur} for the general case).

In this paper, we prove the following two theorems.
\begin{theorem} \label{thm 1.3} Let $\Om \subset \mathbb{R}^{N}$ be open and bounded, $\lambda>0$, $p \in (N-1, + \infty)$, $f \in L^{q_{0}}(\Om)$, $f \not =0$,~$q_{0}$~is defined in (\ref{Eq 1.1}). Then the existence of minimizers for the functional $\mathcal{F}_{\lambda,f,\Omega}$ over the class of all closed proper subsets of $\overline{\Omega}$ fails.
\end{theorem}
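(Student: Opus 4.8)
The plan is to show that once we drop connectedness, we can always strictly decrease $\mathcal{F}_{\lambda,f,\Omega}$ by adding many small "pieces" to a hypothetical minimizer, so no minimizer can exist. Suppose, for contradiction, that $\Sigma$ is a minimizer of $\mathcal{F}_{\lambda,f,\Omega}$ over all closed proper subsets of $\overline{\Omega}$. First I would record the trivial bound $\mathcal{H}^1(\Sigma)<+\infty$ (else $\mathcal{F}_{\lambda,f,\Omega}(\Sigma)=+\infty > \mathcal{F}_{\lambda,f,\Omega}(\emptyset)$, contradicting minimality), and also $\mathcal{H}^1(\Sigma)>0$ or $\Sigma$ is a point or empty — in any case, I claim $C_{f,\Omega}(\Sigma)>C_{f,\Omega}(\emptyset)-\text{(something we can always improve)}$ is not yet the point; the real mechanism is the following. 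Since $p\in(N-1,+\infty)$, a small $(N-1)$-dimensional disk (or more simply, as in the excerpt's own remarks, any closed set whose trace in $\Omega$ is $1$-dimensional with positive finite $\mathcal{H}^1$-measure when $p\in(N-1,+\infty)$, or a single point when $p\in(N,+\infty)$) is \emph{not} removable for $W^{1,p}_0(\Omega)$. The cleanest uniform statement to use: there exists a closed set $K\subset\Omega$ with arbitrarily small $\mathcal{H}^1(K)$ (indeed, for $p>N$ we may even take $K$ a single point, with $\mathcal{H}^1(K)=0$; for $N-1<p\le N$ we take $K$ a tiny segment) such that $C_{f,\Omega}(\Sigma\cup K)<C_{f,\Omega}(\Sigma)$, with the gap bounded below independently of where in $\Omega\setminus\Sigma$ we place $K$, as long as $\Omega\setminus\Sigma$ still contains a nondegenerate ball.

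Here is the key step and the main obstacle. I need a \emph{quantitative lower bound on the compliance drop} obtained by puncturing. Writing $u=u_{f,\Omega,\Sigma}$ and $v=u_{f,\Omega,\Sigma\cup K}$, one has the standard energy-comparison identity
\[
C_{f,\Omega}(\Sigma)-C_{f,\Omega}(\Sigma\cup K)=\tfrac{1}{p'}\!\int_\Omega\!|\nabla u|^p - \tfrac{1}{p'}\!\int_\Omega\!|\nabla v|^p \ge c_p\!\int_\Omega\! |\nabla(u-v)|^p
\]
up to the usual convexity inequalities for the $p$-Laplacian, provided $u\not\equiv v$. So the task reduces to: find $K$ with $\mathcal{H}^1(K)$ as small as we like for which $u_{f,\Omega,\Sigma\cup K}\ne u_{f,\Omega,\Sigma}$, and for which the energy defect is bounded below by a constant depending only on $f,\Omega,p$ (not on $\Sigma$). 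The non-removability results quoted in the excerpt (Theorem~\ref{thm 2.2}, Corollary~\ref{cor 2.4}, Remark~\ref{rem 2.3}, Remark~\ref{rem 2.8}) give $u_{f,\Omega,\Sigma\cup K}\ne u_{f,\Omega,\Sigma}$ \emph{for a suitable $K$}, but the uniformity of the gap is the delicate point. Since $f\ne0$, $u=u_{f,\Omega,\Sigma}$ cannot vanish identically on any fixed ball $B\subset\Omega\setminus\Sigma$ unless $f\equiv 0$ there; but because we do not control $\Sigma$, we cannot a priori fix such a ball. I would resolve this by iterating: rather than seeking a single uniform improvement, observe that for any admissible $\Sigma$ with $\mathcal{F}_{\lambda,f,\Omega}(\Sigma)\le C_{f,\Omega}(\emptyset)$ we have $\mathcal{H}^1(\Sigma)\le C_{f,\Omega}(\emptyset)/\lambda=:M$, so $\Sigma\cap\Omega$, having finite $\mathcal{H}^1$-measure, has empty interior and in fact cannot disconnect $\Omega$ into pieces all of which are tiny — more precisely, $\Omega\setminus\Sigma$ contains a ball of radius $r_0=r_0(\Omega,M)>0$. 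On that ball, $f\ne 0$ forces (via the equation and unique continuation / the weak formulation) that $u$ is not locally constant, hence adding a small $K$ inside that ball genuinely changes $v$ and lowers the compliance by a definite amount $\delta_0=\delta_0(f,\Omega,p,M)>0$. This contradicts minimality once $\lambda\,\mathcal{H}^1(K)<\delta_0$, which we can arrange.

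An alternative, perhaps cleaner, route that sidesteps the uniformity issue entirely: do not aim for a single improvement but show the infimum over all closed proper subsets equals $\inf_{\mathcal{K}(\Omega)}\mathcal{F}_{\lambda,f,\Omega}$ restricted further, and exhibit a \emph{minimizing sequence with no convergent (in Hausdorff distance / as measures) subsequence whose limit is admissible and attains the value}. Concretely, take the solution $\Sigma_\ast\in\mathcal{K}(\Omega)$ of Problem~\ref{P 1.1} (which exists by the quoted compactness results), and to it adjoin a sequence of disjoint "satellite" pieces $K_j$ placed in $\Omega\setminus\Sigma_\ast$ with $\mathcal{H}^1(K_j)\to 0$ but $C_{f,\Omega}(\Sigma_\ast\cup K_j)$ strictly below $C_{f,\Omega}(\Sigma_\ast)$ by $\ge c\,\mathcal{H}^1(K_j)$-independent amount — no, that again needs the gap; so instead place \emph{countably many disjoint} small pieces $K_1,K_2,\dots$ simultaneously, with $\sum_j\mathcal{H}^1(K_j)<\varepsilon$ but $C_{f,\Omega}\big(\Sigma_\ast\cup\bigcup_j K_j\big)<C_{f,\Omega}(\Sigma_\ast)$, and iterate to drive the value strictly below $\inf_{\mathcal{K}(\Omega)}\mathcal{F}_{\lambda,f,\Omega}$. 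I expect the main obstacle in either route to be exactly this quantitative non-removability / energy-defect estimate that does not degenerate as the added set shrinks and is insensitive to the ambient $\Sigma$; I would handle it by the ball-of-definite-size argument of the previous paragraph together with the scaling behavior of $p$-capacity of small segments (resp. points) in dimension $N$ with $p>N-1$ (resp. $p>N$), which is precisely what makes these sets non-removable and is quantified in the capacity estimates of Section~2.
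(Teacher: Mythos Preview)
Your approach is genuinely different from the paper's, and for $p>N$ it works cleanly: if $\Sigma$ were a minimizer then $u_{f,\Omega,\Sigma}\not\equiv 0$ (else $f=0$), so there is $x_0\in\Omega\setminus\Sigma$ with $u_{f,\Omega,\Sigma}(x_0)\neq 0$; since $\{x_0\}$ has positive $p$-capacity, $\Sigma\cup\{x_0\}$ strictly lowers the compliance at zero length cost, contradicting minimality. The difficulty is the range $N-1<p\le N$, and here your argument has a real gap. Your key assertion is that adding a small segment $K$ ``lowers the compliance by a definite amount $\delta_0=\delta_0(f,\Omega,p,M)>0$'' independent of $K$, after which you choose $\mathcal{H}^1(K)<\delta_0/\lambda$. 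This is false as stated: when $p\le N$ the compliance drop tends to $0$ as $\mathcal{H}^1(K)\to 0$, because ${\rm Cap}_p(K)\to 0$ (Proposition~\ref{prop 2.5}). What you would actually need is an asymptotic expansion showing that the drop behaves like $c\,|u(x_0)|^{p'}({\rm Cap}_p(K))^{1/(p-1)}$ (or similar), and then that this dominates $\lambda\,\mathcal{H}^1(K)$ for small $K$ because ${\rm Cap}_p(K)\sim \mathcal{H}^1(K)^{N-p}$ with $N-p<1$. Such ``topological derivative'' estimates for the $p$-Laplacian are nontrivial and you do not supply them; nor does the ball-of-definite-radius argument help, since a closed set with $\mathcal{H}^1\le M$ can be $r$-dense for arbitrarily small $r$ (e.g.\ a finite $r$-net of points), and in any case you also need $u\neq 0$ on that ball, which does not follow from $f\neq 0$ globally.

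The paper bypasses all of this with a global construction rather than a local perturbation: it shows directly that the infimum over all closed proper $\Sigma\subsetneqq\overline{\Omega}$ equals $0$. One covers $\Omega$ by a fine grid of $(2n)^N$ cubes of side $R/n$, places in each cube a segment of length $\varepsilon R/n^{N}$ (total length $2^N R\varepsilon$, independent of $n$), and uses a Poincar\'e inequality with capacity (Lemma~\ref{lem 3.1}) on each cube to force the local solutions --- and hence the dual field $\sigma_n$ --- to vanish as $n\to\infty$. The capacity estimates of Proposition~\ref{prop 2.5} and Remark~\ref{rem 2.3} are used only to check that $n^{-p'}({\rm Cap}_p(A_{\varepsilon,n}))^{1-p'}\to 0$, which holds precisely when $p>N-1$. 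Then $\varepsilon\to 0$ gives infimum $=0$; a density argument (Lemma~\ref{lem 3.2}) passes from $g\in L^{p'}$ to $f\in L^{q_0}$; and finally any minimizer would have $C_{f,\Omega}(\widetilde{\Sigma})=\mathcal{H}^1(\widetilde{\Sigma})=0$, forcing $f=0$. This avoids entirely the need for a pointwise asymptotic of the compliance drop.
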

\begin{theorem} \label{thm 1.4} Let $\Om \subset \mathbb{R}^{N}$ be open and bounded, $L>0$, $p \in (N-1, + \infty)$, $f \in L^{q_{0}}(\Om)$, $f \not =0$,~$q_{0}$~is defined in (\ref{Eq 1.1}). Then the existence of minimizers for the $p$-compliance functional $C_{f,\Omega}$ over the class $\bigl\{\Sigma^{\prime}\subset \overline{\Omega}: \,\ \Sigma^{\prime}\,\ \text{is closed},\,\ 0<\mathcal{H}^{1}(\Sigma^{\prime})\leq L\bigr\}$ fails.
\end{theorem}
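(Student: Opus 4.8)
\textbf{Proof strategy for Theorem~\ref{thm 1.4}.} The plan is to argue by contradiction: suppose some closed set $\Sigma_{0}\subset\overline{\Omega}$ with $0<\mathcal{H}^{1}(\Sigma_{0})\leq L$ minimizes $C_{f,\Omega}$ over all closed subsets of $\overline{\Omega}$ obeying that length constraint. The first step is to observe that since $p\in(N-1,+\infty)$ and $\Sigma_{0}$ has finite $\mathcal{H}^{1}$-measure (so in particular $\mathcal{H}^{N}(\Sigma_{0})=0$ and $\Sigma_{0}$ has Hausdorff dimension at most one), one has $W^{1,p}_{0}(\Omega\setminus\Sigma_{0})\subsetneq W^{1,p}_{0}(\Omega)$ \emph{only} when $\Sigma_{0}$ is genuinely ``big enough'' to carry positive $p$-capacity; but even when $C_{f,\Omega}(\Sigma_{0})>C_{f,\Omega}(\emptyset)-\text{something}$, the key point is monotonicity: adding more set can only decrease the compliance, i.e. $\Sigma\subset\Sigma'$ implies $C_{f,\Omega}(\Sigma')\leq C_{f,\Omega}(\Sigma)$, because $W^{1,p}_{0}(\Omega\setminus\Sigma')\subset W^{1,p}_{0}(\Omega\setminus\Sigma)$ shrinks the space over which $-E_{f,\Omega}$ is minimized... wait, one must be careful about the direction. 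Minimizing $E_{f,\Omega}$ over a \emph{smaller} space gives a \emph{larger} minimum value of $E_{f,\Omega}$, hence a \emph{smaller} value of $C_{f,\Omega}=-E_{f,\Omega}(u)$. So indeed $C_{f,\Omega}$ is nonincreasing under inclusion of the obstacle set.

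The heart of the argument is then to produce, from the putative minimizer $\Sigma_{0}$, a strictly better competitor that still satisfies $\mathcal{H}^{1}\leq L$. The idea is to \emph{spread out} the set: replace $\Sigma_{0}$ by a disconnected set $\Sigma_{1}$ of the same (or smaller) length which ``reaches more of $\Omega$'' and thereby strictly lowers the compliance. Concretely, I would pick a small parameter $\varepsilon>0$, take a finite $\varepsilon$-net $\{x_{1},\dots,x_{k}\}$ of $\overline{\Omega}$, and consider $\Sigma_{1}=\bigcup_{i=1}^{k}\overline{B(x_{i},r)}\cap\overline{\Omega}$ or, better for the length budget, a union of $k$ tiny segments (or $(N-1)$-dimensional pieces when $N\geq 3$, but in codimension computations one wants $1$-dimensional pieces since $p>N-1$) centered at the net points, each of length $\ell$, chosen so that $k\ell\leq L$. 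Since each individual segment has positive $p$-capacity precisely because $p>N-1$ (Corollary~\ref{cor 2.4}), the set $\Sigma_{1}$ forces $W^{1,p}_{0}(\Omega\setminus\Sigma_{1})$ to vanish on a dense-in-$\overline\Omega$ family of small sets. As $\varepsilon\to 0$ with the length budget fixed, one expects $C_{f,\Omega}(\Sigma_{1})\to 0$: any fixed $u\in W^{1,p}_{0}(\Omega)$ can be approximated in $W^{1,p}$ by functions vanishing near each $x_{i}$, so $u_{f,\Omega,\Sigma_{1}}\to 0$ strongly and $C_{f,\Omega}(\Sigma_{1})=\tfrac{1}{p'}\int_{\Omega}fu_{f,\Omega,\Sigma_{1}}\to 0$. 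Meanwhile $C_{f,\Omega}(\Sigma_{0})>0$ strictly, because $f\not\equiv 0$ forces $u_{f,\Omega,\Sigma_{0}}\not\equiv 0$ (the set $\Sigma_{0}$, having finite length, cannot be all of $\overline\Omega$, so $\Omega\setminus\Sigma_{0}$ is a nonempty open set and $E_{f,\Omega}$ attains a negative minimum there). Hence for $\varepsilon$ small, $C_{f,\Omega}(\Sigma_{1})<C_{f,\Omega}(\Sigma_{0})$, contradicting minimality.

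The main obstacle I anticipate is the quantitative capacity estimate needed to show $C_{f,\Omega}(\Sigma_{1})\to 0$: one must verify that a union of $k\asymp\varepsilon^{-N}$ segments of total length $\leq L$ (so each of length $\asymp\varepsilon^{N}$) still "sees" enough $p$-capacity to kill the compliance in the limit. This is where $p>N-1$ is essential and where the sharp integrability hypothesis $f\in L^{q_{0}}(\Omega)$ enters, through the Sobolev embedding controlling $\int fu$ by $\|\nabla u\|_{p}$. The clean way to package this is: show that for every $\delta>0$ and every $u\in W^{1,p}_{0}(\Omega)$ there exists a competitor configuration $\Sigma_{1}$ (with $\mathcal{H}^{1}(\Sigma_{1})\leq L$, disconnected) and $v\in W^{1,p}_{0}(\Omega\setminus\Sigma_{1})$ with $\|u-v\|_{W^{1,p}_{0}(\Omega)}<\delta$; by density of such $u$ (e.g. $C^{\infty}_{c}$) and by taking $u$ close to the free solution $u_{f,\Omega,\emptyset}$ one cannot yet conclude — instead one fixes \emph{one} good competitor and lets it win. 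So the correct order is: (i) monotonicity and strict positivity $C_{f,\Omega}(\Sigma_{0})>0$; (ii) construct $\Sigma_{1}(\varepsilon)$ as a length-budgeted dense union of capacitary pieces; (iii) prove $\lim_{\varepsilon\to0}C_{f,\Omega}(\Sigma_{1}(\varepsilon))=0$ via a covering/capacity argument using $p>N-1$; (iv) conclude $C_{f,\Omega}(\Sigma_{1}(\varepsilon))<C_{f,\Omega}(\Sigma_{0})$ for small $\varepsilon$, contradiction. Step (iii) is the crux, and I expect it mirrors, with minor modifications, the construction already used to prove Theorem~\ref{thm 1.3}, the only difference being that here the length constraint $\mathcal{H}^{1}\leq L$ is a hard cap rather than a penalization, which actually makes the budgeting slightly cleaner.
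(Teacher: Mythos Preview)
Your proposal is correct and follows essentially the same route as the paper: show that the infimum of $C_{f,\Omega}$ over closed sets with $0<\mathcal{H}^{1}\leq L$ is zero by scattering many tiny segments (the construction from Step~1 of the proof of Theorem~\ref{thm 1.3}, which you correctly identify as the crux), then derive a contradiction from $f\not\equiv 0$. The monotonicity observation is not actually needed, and the passage from $g\in L^{p'}$ to general $f\in L^{q_{0}}$ is handled in the paper via the approximation Lemma~\ref{lem 3.2}, which is already built into the Theorem~\ref{thm 1.3} argument you defer to.
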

\section{Preliminaries}
 For any nonempty open set $U\subset \mathbb{R}^{N}$, $W^{1,p}(U)$ is the Sobolev space of functions defined on $U$ whose distributional gradient $\nabla u$ belongs to $L^{p}(U;\mathbb{R}^{N})$. We denote by $W^{1,p}_{0}(U)$ the closure of $C^{\infty}_{0}(U)$ in $W^{1,p}(U)$, where $C^{\infty}_{0}(U)$ is the space of functions in $C^{\infty}(U)$ with compact support in $U$. By $C^{0}(X)$ we shall denote the space of continuous functions on $X$. For each set $A \subset \mathbb{R}^{N}$, the set $A^{c}$ will denote its complement in $\mathbb{R}^{N}$, that is, $A^{c}=\mathbb{R}^{N}\backslash A$. We shall denote by $\mathcal{H}^{d}(A)$ the $d$-dimensional Hausdorff measure of $A$. 

Let us recall the notion of the Bessel capacity (see e.g. \cite{Potential}, \cite{Ziemer}). 
\begin{defn} \label{def 2.1} \textit{ For $p\in (1,+\infty)$, the Bessel $(1,p)$-capacity of a set $E\subset \mathbb{R}^{N}$ is defined as
		\[
		{\rm Cap}_{p}(E)=\inf \bigl\{\|f\|^{p}_{L^{p}(\mathbb{R}^{N})} :\, g*f \geq 1\,\ \text{on}\,\ E,\,\ f\in L^{p}(\mathbb{R}^{N}),\,\ f \geq 0\bigr\},
		\]
		where the Bessel kernel $g$ is defined as that function whose Fourier transform is}
	\[
	\hat{g}(\xi)=(2\pi)^{-\frac{N}{2}}\bigl(1+|\xi|^{2}\bigr)^{-\frac{1}{2}}.
	\]
\end{defn}
We say that a property holds $p$-quasi everywhere (abbreviated as $p$-q.e.) if it holds except on a set $A$ where $\cp_{p}(A)=0$. It is worth mentioning that, by \cite[Corollary 2.6.8]{Potential}, for every $p\in (1,+\infty)$, the notion of the Bessel capacity ${\rm Cap}_{p}$ is equivalent to the following 
\[
\widetilde{{\rm Cap}_{p}}(E)=\inf_{u \in W^{1,p}(\mathbb{R}^{N})}\biggl\{\int_{\mathbb{R}^{N}}|\nabla u|^{p}\diff x + \int_{\mathbb{R}^{N}}|u|^{p}\diff x : u \geq 1 \,\ \text{on some neighborhood of $E$}\biggr\}
\]
in the sense that there exists $C=C(N,p)>0$ such that for any set $E\subset \mathbb{R}^{N}$,
\[
\frac{1}{C}\widetilde{{\rm Cap}_{p}}(E) \leq {\rm Cap}_{p}(E) \leq C\widetilde{{\rm Cap}_{p}}(E).
\]
The notion of capacity is crucial in the investigation of the pointwise behavior of Sobolev functions.

For convenience, we recall the next theorems and propositions.

\begin{theorem}\label{thm 2.2} Let $E\subset \mathbb{R}^{N}$ and $p\in (1,N]$. Then $\cp_{p}(E)=0$ if $\mathcal{H}^{N-p}(E)<+\infty$. Conversely, if $\cp_{p}(E)=0$, then $\mathcal{H}^{N- p+\varepsilon}(E)=0$ for every $\varepsilon>0$.
\end{theorem}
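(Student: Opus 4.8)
This is a classical result in (nonlinear) potential theory (see \cite{Potential}, \cite{Ziemer}); my plan is to read off both halves of the statement from the description of Bessel capacity by measures, using two standard tools besides the comparability $\cp_{p}\simeq\widetilde{\cp_{p}}$ recalled above. The first is Frostman's lemma --- if $A$ is Borel with $\mathcal{H}^{s}(A)>0$, then some compact $K\subseteq A$ carries a nonzero Radon measure $\mu$ with $\mu(B(x,t))\leq t^{s}$ for all $x$ and all $t>0$. The second is the Hedberg--Wolff characterisation: for compact $K$ one has $\cp_{p}(K)>0$ if and only if $K$ carries a nonzero Radon measure $\mu$ of finite Wolff energy $\int_{K}W^{p}_{1}\mu\diff\mu<+\infty$, where $W^{p}_{1}\mu(x)=\int_{0}^{1}\bigl(\mu(B(x,t))\,t^{p-N}\bigr)^{1/(p-1)}\frac{\diff t}{t}$. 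Measurability is not an issue: a $\cp_{p}$-null set lies inside a Borel $\cp_{p}$-null set and $\mathcal{H}^{N-p}$ is Borel regular, so in either implication one may enlarge $E$ to a Borel set without changing the relevant hypothesis, and capacity is inner regular on Borel sets.

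\emph{The converse implication} ($\cp_{p}(E)=0\Rightarrow\mathcal{H}^{N-p+\varepsilon}(E)=0$ for every $\varepsilon>0$). I would argue by contraposition. If $\mathcal{H}^{N-p+\varepsilon}(E)>0$, Frostman's lemma produces $\mu\neq0$ supported on a compact subset of $E$ with $\mu(B(x,t))\leq t^{N-p+\varepsilon}$, and then
\[
W^{p}_{1}\mu(x)\leq\int_{0}^{1}\bigl(t^{N-p+\varepsilon}\cdot t^{p-N}\bigr)^{1/(p-1)}\frac{\diff t}{t}=\int_{0}^{1}t^{\frac{\varepsilon}{p-1}-1}\diff t<+\infty
\]
uniformly in $x$; hence $\mu$ has finite Wolff energy and $\cp_{p}(E)>0$. (For $p=2$ this is the classical fact that $\mu(B(x,t))\lesssim t^{N-2+\varepsilon}$ forces finite Riesz energy $\iint|x-y|^{2-N}\diff\mu(x)\diff\mu(y)<+\infty$.)

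\emph{The direct implication} ($\mathcal{H}^{N-p}(E)<+\infty\Rightarrow\cp_{p}(E)=0$). Suppose, for contradiction, that $\cp_{p}(E)>0$. Then some compact $K\subseteq E$ carries $\mu\neq0$ with $\int_{K}W^{p}_{1}\mu\diff\mu<+\infty$, so $W^{p}_{1}\mu(x)<+\infty$ for $\mu$-a.e.\ $x$. On the other hand $\mathcal{H}^{N-p}(K)\leq\mathcal{H}^{N-p}(E)<+\infty$, so the standard upper-density comparison (a $5r$-covering argument) shows that for every $\lambda>0$ the set $\{x\in K:\limsup_{t\to0}\mu(B(x,t))\,t^{p-N}\leq\lambda\}$ has $\mu$-measure at most $C(N,p)\,\lambda\,\mathcal{H}^{N-p}(K)$; letting $\lambda\downarrow0$ yields $\limsup_{t\to0}\mu(B(x,t))\,t^{p-N}>0$ for $\mu$-a.e.\ $x$. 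A positive such upper limit forces $W^{p}_{1}\mu(x)=+\infty$: picking $t_{k}\downarrow0$ with $\mu(B(x,t_{k}))\geq\delta\,t_{k}^{N-p}$ for some $\delta=\delta(x)>0$ and with $t_{k+1}\leq t_{k}/2$, the intervals $[t_{k},2t_{k}]$ are pairwise disjoint and $\mu(B(x,t))\,t^{p-N}\geq\delta\,2^{p-N}$ on each of them, so $W^{p}_{1}\mu(x)\geq\sum_{k}(\delta\,2^{p-N})^{1/(p-1)}\log 2=+\infty$ --- contradicting $W^{p}_{1}\mu<+\infty$ $\mu$-a.e. Hence $\cp_{p}(K)=0$ for every compact $K\subseteq E$, so $\cp_{p}(E)=0$. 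The case $p=N$ is contained in this (then $\mathcal{H}^{0}(E)<+\infty$ means $E$ is finite, every measure on $E$ is atomic, and $W^{N}_{1}\mu=+\infty$ at each atom), or follows at once from the logarithmic cut-off showing that points carry zero $N$-capacity.

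The delicate point --- and the reason this is not a bare covering estimate --- is that the exponent $N-p$ is critical: a single near-optimal cover of $E$ yields only $\cp_{p}(E)\lesssim\mathcal{H}^{N-p}(E)$, never $\cp_{p}(E)=0$, so one must extract the logarithmic gain that separates $\cp_{p}(E)=0$ from the vanishing of $\mathcal{H}^{N-p+\varepsilon}(E)$ for all $\varepsilon>0$ --- which is exactly what the disjoint-intervals step above accomplishes. All of this is classical and one may instead simply invoke \cite{Potential} or \cite{Ziemer}; an alternative, more hands-on proof of the direct implication constructs admissible functions for $\widetilde{\cp_{p}}$ directly from coverings of $E$ at a sequence of dyadic scales and estimates their $W^{1,p}$-norms.
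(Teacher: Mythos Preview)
Your proposal is correct. The paper itself does not give a proof at all: it simply refers to \cite[Theorem~5.1.9]{Potential} for the direct implication and to \cite[Theorem~5.1.13]{Potential} for the converse, and stops there. What you have written is essentially an outline of the arguments that lie behind those citations --- the dual description of $\cp_{p}$ via the Wolff potential (the Hedberg--Wolff theorem), Frostman's lemma for the converse, and the upper-density comparison between $\mu$ and $\mathcal{H}^{N-p}$ for the direct part. Your treatment of the critical exponent, extracting the logarithmic divergence of $W^{p}_{1}\mu$ along a geometric sequence of scales, is exactly the mechanism used in \cite{Potential} to upgrade the naive covering bound $\cp_{p}(E)\lesssim\mathcal{H}^{N-p}(E)$ to the sharp conclusion $\cp_{p}(E)=0$.

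Two cosmetic remarks. First, after enlarging $E$ to a Borel superset $E'$ with the same $\mathcal{H}^{N-p}$-measure, the compact set $K$ produced by inner regularity lies in $E'$, not necessarily in $E$; your argument only uses $\mathcal{H}^{N-p}(K)<+\infty$, so this is harmless, but the notation ``$K\subseteq E$'' is slightly inaccurate. Second, Frostman's lemma as usually stated gives $\mu(B(x,t))\leq Ct^{s}$ with some constant $C$, which you can of course absorb by rescaling $\mu$. Neither point affects the correctness of the argument.
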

\begin{proof} For a proof of the fact that  ${\rm Cap}_p(E)=0$ if $\mathcal{H}^{N-p}(E)<+\infty$, we refer to \cite[Theorem 5.1.9]{Potential}. The fact that if ${\rm Cap}_{p}(E)=0,$ then $\mathcal{H}^{N-p+\varepsilon}(E)=0$ for every $\varepsilon>0$ follows from \cite[Theorem 5.1.13]{Potential}. 
\end{proof}

\begin{rem}\label{rem 2.3} Let $p \in (N,+\infty)$. Then, there exists $C=C(N,p)>0$ such that for any nonempty set $E\subset \mathbb{R}^{N}$, ${\rm Cap}_p(E)\geq C$. We can take $C=  {\rm Cap}_{p}(\{0\})$, which is positive by \cite[Proposition 2.6.1 (a)]{Potential}, and use the fact that the Bessel $(1,p)$-capacity is an invariant under translations and is nondecreasing with respect to set inclusion.
\end{rem}

Recall that for all $E\subset \mathbb{R}^{N}$ the number
\[
\hdim(E)=\sup\bigl\{s \in [0,+\infty): \mathcal{H}^{s}(E)=+\infty\bigr\}=\inf\bigl\{t\in [0,+\infty): \mathcal{H}^{t}(E)=0\bigr\}
\]
is called the Hausdorff dimension of $E$.

\begin{cor}\label{cor 2.4} Let $E \subset \mathbb{R}^{N}$, $\hdim(E)=1$ and $\mathcal{H}^{1}(E)<+\infty$. Then ${\rm Cap}_p(E)>0$ if and only if $p\in (N-1, +\infty)$.
\end{cor}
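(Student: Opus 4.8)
The plan is to reduce everything to the two facts already recorded, namely Theorem~\ref{thm 2.2} and Remark~\ref{rem 2.3}, by a short case analysis on the position of $p$ relative to $N-1$ and $N$. The one preliminary observation I will use throughout is that the hypothesis $\hdim(E)=1$, together with the monotonicity of Hausdorff measures in the exponent, forces $\mathcal{H}^{s}(E)=+\infty$ for every $s<1$ and $\mathcal{H}^{t}(E)=0$ for every $t>1$; in particular $E$ is nonempty.

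First I would treat the ``if'' direction. Suppose $p\in(N-1,N]$, so that $N-p\in[0,1)$, and pick $\varepsilon>0$ with $N-p+\varepsilon<1$. If $\cp_{p}(E)$ were $0$, the converse half of Theorem~\ref{thm 2.2} would give $\mathcal{H}^{N-p+\varepsilon}(E)=0$, whence $\hdim(E)\leq N-p+\varepsilon<1$, contradicting $\hdim(E)=1$; hence $\cp_{p}(E)>0$. For $p\in(N,+\infty)$ I would simply invoke Remark~\ref{rem 2.3}: there is a constant $C=C(N,p)>0$ with $\cp_{p}(E)\geq C$ for every nonempty set, and $E\neq\emptyset$. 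Together these two subcases cover all $p\in(N-1,+\infty)$.

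For the ``only if'' direction I would argue by contraposition, showing that $p\in(1,N-1]$ forces $\cp_{p}(E)=0$ (this is an empty range, hence there is nothing to prove, when $N=2$). If $p=N-1$ then $\mathcal{H}^{N-p}(E)=\mathcal{H}^{1}(E)<+\infty$ by hypothesis, while if $1<p<N-1$ then $N-p>1$ and the preliminary observation gives $\mathcal{H}^{N-p}(E)=0<+\infty$. In both cases $p\in(1,N]$ and $\mathcal{H}^{N-p}(E)<+\infty$, so the direct half of Theorem~\ref{thm 2.2} yields $\cp_{p}(E)=0$. Combining the two directions gives the stated equivalence.

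I do not expect a genuine obstacle here; the only spots demanding a little care are the endpoint exponents. At $p=N$ one cannot feed $\mathcal{H}^{N-p}(E)=\mathcal{H}^{0}(E)$ into the direct half of Theorem~\ref{thm 2.2}, since that quantity is $+\infty$ ($E$ being infinite), which is precisely why the ``if'' argument above must go through the converse half with a small $\varepsilon$; and at $p=N-1$ one must remember to use exactly the finiteness hypothesis $\mathcal{H}^{1}(E)<+\infty$ rather than anything about $\mathcal{H}^{s}$ for $s\neq1$.
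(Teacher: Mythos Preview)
Your proof is correct and follows essentially the same route as the paper's own argument: Remark~\ref{rem 2.3} for $p>N$, the converse half of Theorem~\ref{thm 2.2} with a small $\varepsilon$ for $p\in(N-1,N]$, and the direct half of Theorem~\ref{thm 2.2} for $p\in(1,N-1]$. The only cosmetic differences are that the paper fixes the explicit choice $\varepsilon=(p-N+1)/2$ and does not split the range $(1,N-1]$ into the two subcases $p=N-1$ and $p<N-1$, whereas you make the case distinction explicit and note that $E\neq\emptyset$ before invoking Remark~\ref{rem 2.3}.
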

\begin{proof}[Proof of Corollary \ref{cor 2.4}] If $p>N$, then by Remark~\ref{rem 2.3}, ${\rm Cap}_{p}(E)>0$. Assume by contradiction that ${\rm Cap}_{p}(E)=0$ for some $p \in (N-1,N]$. Taking $\varepsilon= (p-N+1)/2$  so that $0<N-p+\varepsilon<1$, by Theorem~\ref{thm 2.2} we get, $\mathcal{H}^{N-p+\varepsilon}(E)=0$, but this leads to a contradiction with the fact that $\hdim(E)=1$. On the other hand, if $p \in (1, N-1]$, then $\mathcal{H}^{N-p}(E)<+\infty$ and by Theorem~\ref{thm 2.2}, ${\rm Cap}_{p}(E)=0$. This completes the proof of Corollary~\ref{cor 2.4}.
\end{proof}

\begin{prop} \label{prop 2.5} Let $t\in (0,1]$ and $A_{t}=[0, t]\times \{0\}^{N-1}$. Then the following assertions hold.
	\begin{enumerate}[label=(\roman*)]
		\item If $p \in (N-1, N)$, then there exists a constant $C=C(N,p)>0$ such that
		\[
		t^{N-p} \leq C {\rm Cap}_{p}(A_{t}).
		\]
		\item If $p=N$, then there exists a constant $C=C(N)>0$ such that
		\[
		\biggl(\log\biggl(\frac{C}{t}\biggr)\biggr)^{1-p} \leq C {\rm Cap}_{p}(A_{t}).
		\]
	\end{enumerate}
\end{prop}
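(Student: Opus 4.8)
The plan is to obtain both lower bounds from one Frostman-type estimate, using the natural one-dimensional measure on the segment together with the description of capacity by Wolff potentials. Put $\mu:=\mathcal{H}^{1}\mres A_{t}$, so $\operatorname{supp}\mu=A_{t}$ and $\mu(\mathbb{R}^{N})=\mathcal{H}^{1}(A_{t})=t$, and for $x\in\mathbb{R}^{N}$ set
\[
\mathcal{W}^{\mu}_{p}(x):=\int_{0}^{1}\Bigl(\frac{\mu\bigl(B(x,r)\bigr)}{r^{\,N-p}}\Bigr)^{\frac{1}{p-1}}\frac{\diff r}{r}.
\]
By the nonlinear potential theory of Havin--Maz'ya and Hedberg--Wolff (see \cite[Chapters 4--5]{Potential}), for $1<p\le N$ there is $C=C(N,p)>0$ with
\[
\mu(\mathbb{R}^{N})\ \le\ C\,\Bigl(\sup_{x\in\operatorname{supp}\mu}\mathcal{W}^{\mu}_{p}(x)\Bigr)^{p-1}{\rm Cap}_{p}\bigl(\operatorname{supp}\mu\bigr)
\]
for every nonnegative Radon measure $\mu$ with compact support. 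Hence it suffices to estimate $\sup_{A_{t}}\mathcal{W}^{\mu}_{p}$ from above.

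This is elementary for a segment: for $x\in A_{t}$ and $r\in(0,1]$ one has $\mu(B(x,r))\asymp\min(r,t)$ with absolute constants, so, splitting the defining integral at $r=t$, the part over $(0,t)$ is a constant (depending only on $N,p$) times $\int_{0}^{t}r^{(2-N)/(p-1)}\diff r$. This last integral converges \emph{precisely because} $p>N-1$ --- the sole place where that hypothesis enters --- and equals (up to constants) $t^{(p-N+1)/(p-1)}$ if $p<N$ and $t^{1/(N-1)}$ if $p=N$; the part over $(t,1)$ is a constant times $t^{1/(p-1)}\int_{t}^{1}r^{-(N-1)/(p-1)}\diff r$, which is again of order $t^{(p-N+1)/(p-1)}$ when $p<N$ (the integral being governed by its lower endpoint, since $(N-1)/(p-1)>1$) and equals $t^{1/(N-1)}\log(1/t)$ when $p=N$. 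Adding these, $\sup_{A_{t}}\mathcal{W}^{\mu}_{p}\asymp t^{(p-N+1)/(p-1)}$ when $N-1<p<N$, and $\sup_{A_{t}}\mathcal{W}^{\mu}_{p}\asymp t^{1/(N-1)}\log(1/t)$ when $p=N$ and $t\le t_{0}(N)$, all constants depending only on $N,p$.

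Inserting these into the Wolff inequality and using $\mu(\mathbb{R}^{N})=t$ yields $t\le C\,t^{\,p-N+1}\,{\rm Cap}_{p}(A_{t})$ for $N-1<p<N$, hence ${\rm Cap}_{p}(A_{t})\ge C^{-1}t^{N-p}$; and $t\le C\,t\,(\log(1/t))^{N-1}\,{\rm Cap}_{N}(A_{t})$ for $p=N$, $t\le t_{0}(N)$, hence ${\rm Cap}_{N}(A_{t})\ge C^{-1}(\log(1/t))^{1-N}$. Since $1-p<0$, $(\log(1/t))^{1-p}\ge(\log(C/t))^{1-p}$ for $C\ge1$, so (ii) follows after enlarging $C$ (also so that $\log(C/t)>0$ on the relevant range); the range $t\in[t_{0}(N),1]$ is trivial, since there ${\rm Cap}_{p}(A_{t})\ge{\rm Cap}_{p}(A_{t_{0}(N)})>0$ while $t^{N-p}\le1$ and $(\log(C/t))^{1-p}$ stays bounded.

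The points requiring care are the convergence of the inner Wolff integral at $r=0^{+}$ (exactly the threshold $p>N-1$), the use of the capacitary strong-type inequality at the critical exponent $p=N$ in case (ii) (valid, but to be quoted carefully from \cite{Potential}), and the bookkeeping of constants. A hands-on alternative avoids potential theory: reduce, via ${\rm Cap}_{p}\asymp\widetilde{{\rm Cap}_{p}}$ and a cut-off, to a lower bound for the variational $p$-capacity of $A_{t}$ in a fixed ball, and estimate a generic competitor by a one-dimensional H\"older inequality along the normal rays emanating from the relative interior of $A_{t}$ --- these carry the Jacobian $r^{N-2}$, and $\int_{0}r^{-(N-2)/(p-1)}\diff r<+\infty$ iff $p>N-1$ --- distinguishing whether the competitor has already decayed by a fixed amount within distance $\sim t$ of $A_{t}$, and otherwise comparing with the explicit capacity of a spherical condenser. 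This is elementary but needs a delicate case analysis; the Wolff-potential computation above is shorter.
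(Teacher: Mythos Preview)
Your argument is correct. The paper's own ``proof'' is a one-line citation: since $\diam(A_{t})\le 1$, both assertions are read off from \cite[Corollary 5.1.14]{Potential}, which bounds ${\rm Cap}_{p}$ from below by a suitable Hausdorff-type content of the set. You instead give a self-contained derivation via the Hedberg--Wolff machinery: take the natural Frostman measure $\mu=\mathcal{H}^{1}\mres A_{t}$, invoke the dual characterisation $\mu(K)\le C\,{\rm Cap}_{p}(K)\bigl(\sup_{K}\mathcal{W}^{\mu}_{p}\bigr)^{p-1}$, and compute the truncated Wolff potential of a segment by splitting at $r=t$. This is essentially the mechanism behind the cited corollary, specialised to the concrete set $A_{t}$, so the two proofs rest on the same potential-theoretic foundation; yours simply unpacks the black box. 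What the citation buys is brevity and a uniform statement for arbitrary compacta; what your route buys is transparency about where the hypothesis $p>N-1$ enters (convergence of $\int_{0}r^{(2-N)/(p-1)}\diff r$) and why the logarithm appears at $p=N$. Your handling of the range $t\in[t_{0}(N),1]$ via monotonicity of capacity, and the adjustment of the constant so that $\log(C/t)$ stays positive and dominates $\log(1/t)$, is fine. The caveat you flag about the Wolff inequality at the endpoint $\alpha p=N$ is appropriate but not an obstruction: with Bessel kernels the relevant estimate holds for $1<p\le N$ (see \cite[Theorem 4.5.2 and its corollaries]{Potential}).
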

\begin{proof} Since $\diam(A_{t})\leq 1$, $(i)$ and $(ii)$ follows from \cite[Corollary 5.1.14]{Potential}.
\end{proof}
\begin{defn} \label{def 2.6} Let the function $u$ be defined $p$-q.e. on $\mathbb{R}^{N}$ or on some open subset. Then $u$ is said to be $p$-quasi continuous if for every $\varepsilon>0$ there is an open set $A$ with ${\rm Cap}_{p}(A)<\varepsilon$ such that the restriction of $u$ to the complement of $A$ is continuous in the induced topology.
\end{defn}

\begin{theorem} \label{thm 2.7} Let $Y\subset \mathbb{R}^{N}$ be an open set and $p \in (1,+\infty)$. Then for each $u \in W^{1,p}(Y)$ there exists a $p$-quasi continuous function $\widetilde{u} \in W^{1,p}(Y)$, which is uniquely defined up to a set of ${\rm Cap}_{p}$-capacity zero and $u=\widetilde{u}$ a.e. in $Y$.
\end{theorem}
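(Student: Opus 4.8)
The statement is classical; a complete proof is in \cite[Section~6.1]{Potential} or \cite[Section~3.3]{Ziemer}, so here I only describe the route I would take. I would split the claim into an \emph{existence} part (construction of a $p$-quasicontinuous representative) and a \emph{uniqueness} part (two $p$-quasicontinuous representatives of the same $u$ agree $p$-q.e.), and handle them by rather different means. Since $p$-quasicontinuity is a local property, it suffices to produce $\widetilde u$ for $u\in W^{1,p}(\mathbb{R}^{N})$ and to prove uniqueness there; for a general open $Y$ one covers $Y$ by balls $B\Subset Y$, uses a Sobolev extension operator on each, and glues.

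\emph{Existence.} By the Meyers--Serrin density theorem I would choose $u_{k}\in C^{\infty}(\mathbb{R}^{N})\cap W^{1,p}(\mathbb{R}^{N})$ with $u_{k}\to u$ in $W^{1,p}$ and, after passing to a subsequence, arrange $\|u_{k+1}-u_{k}\|_{W^{1,p}}^{p}\le 2^{-k(p+1)}$. The key elementary ingredient is the capacitary weak-type bound ${\rm Cap}_{p}(\{|\varphi|>\lambda\})\le C\lambda^{-p}\|\varphi\|_{W^{1,p}(\mathbb{R}^{N})}^{p}$ for continuous $\varphi\in W^{1,p}(\mathbb{R}^{N})$, obtained immediately by inserting $|\varphi|/\lambda$ as a competitor in $\widetilde{{\rm Cap}_{p}}$ and invoking the comparability of $\widetilde{{\rm Cap}_{p}}$ and ${\rm Cap}_{p}$ recalled above. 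Applying it to $\varphi=u_{k+1}-u_{k}$ and $\lambda=2^{-k}$ gives open sets $E_{k}:=\{|u_{k+1}-u_{k}|>2^{-k}\}$ with ${\rm Cap}_{p}(E_{k})\le C2^{-k}$, so $F_{j}:=\bigcup_{k\ge j}E_{k}$ satisfies ${\rm Cap}_{p}(F_{j})\to 0$ and the series $\sum_{k}(u_{k+1}-u_{k})$ converges uniformly on $\mathbb{R}^{N}\setminus F_{j}$. Hence $u_{k}$ converges, uniformly off each $F_{j}$, to a function $\widetilde u$ defined outside $\bigcap_{j}F_{j}$, a set of ${\rm Cap}_{p}$-capacity zero; $\widetilde u$ is continuous on every $\mathbb{R}^{N}\setminus F_{j}$, hence $p$-quasicontinuous, and $u_{k}\to u$ in $L^{p}$ forces $\widetilde u=u$ a.e., so $\widetilde u\in W^{1,p}(\mathbb{R}^{N})$.

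\emph{Uniqueness.} If $\widetilde u_{1},\widetilde u_{2}$ are $p$-quasicontinuous and both equal $u$ a.e., then $v:=\widetilde u_{1}-\widetilde u_{2}$ is $p$-quasicontinuous with $v=0$ a.e., and it suffices to show such a $v$ vanishes $p$-q.e. Fix $i$ and an open $\omega_{i}$ with ${\rm Cap}_{p}(\omega_{i})<2^{-i}$ and $v|_{Y\setminus\omega_{i}}$ continuous. The crucial step is a Lebesgue-density observation: if $c\notin\omega_{i}$ and $v(c)\neq 0$, then $\{v\neq 0\}\cap(Y\setminus\omega_{i})$ is a relative neighbourhood of $c$ in $Y\setminus\omega_{i}$, so $|\{v\neq 0\}\cap B(c,r)|\ge|(Y\setminus\omega_{i})\cap B(c,r)|$ for small $r$, and since $\{v\neq 0\}$ is Lebesgue-null this forces $\omega_{i}$ to have Lebesgue density $1$ at $c$. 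Therefore $\{v\neq 0\}\subset D_{i}$, the set of density-$1$ points of $\omega_{i}$, and $D_{i}\subset\{M\mathbf{1}_{\omega_{i}}>1/2\}$ for the Hardy--Littlewood maximal operator $M$. Picking $f_{i}\ge 0$ whose Bessel potential satisfies $G_{1}f_{i}\ge\mathbf{1}_{\omega_{i}}$ and $\|f_{i}\|_{L^{p}}^{p}\le 2\,{\rm Cap}_{p}(\omega_{i})$, the standard capacitary weak-type inequality for $M$ applied to $G_{1}f_{i}$ yields ${\rm Cap}_{p}(D_{i})\le C\,{\rm Cap}_{p}(\omega_{i})\to 0$, whence ${\rm Cap}_{p}(\{v\neq 0\})\le\inf_{i}{\rm Cap}_{p}(D_{i})=0$.

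I expect the uniqueness part to be the main obstacle. The tempting shortcut—that $\{v\neq 0\}$, being Lebesgue-null, automatically has ${\rm Cap}_{p}$-capacity zero—is false (for instance a segment has positive ${\rm Cap}_{p}$ as soon as $p>N-1$, by Corollary~\ref{cor 2.4}), so the density refinement above, combined with a capacitary estimate for maximal functions, is genuinely needed; everything in the existence part, by contrast, is Borel--Cantelli bookkeeping on top of the weak-type capacity bound and the localization.
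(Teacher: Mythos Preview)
Your sketch is correct and substantially more detailed than what the paper itself provides: the paper's ``proof'' of this theorem is only a pointer to \cite[Theorem~2.8]{p-compl}, with the remark that the argument there carries over to all $N\ge 2$. There is therefore no internal argument to compare against, and your self-contained treatment stands on its own.

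The existence half is exactly the classical construction (fast-Cauchy smooth approximants, capacitary weak-type bound, Borel--Cantelli). Your uniqueness argument via Lebesgue density and a capacitary maximal inequality is one of the known routes and is correct; just be aware that the ``standard capacitary weak-type inequality for $M$'' you invoke is itself a nontrivial input---it follows, for instance, from Kinnunen's theorem that $M$ is bounded on $W^{1,p}(\mathbb{R}^{N})$ for $p>1$, combined with the elementary bound ${\rm Cap}_{p}(\{w>\lambda\})\le C\lambda^{-p}\|w\|_{W^{1,p}}^{p}$ applied to $w=M(g*f_{i})$ (the level set is open since $Mw$ is lower semicontinuous). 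One small organizational point: the gluing you use to pass from $\mathbb{R}^{N}$ to a general open $Y$ relies on the uniqueness statement on overlaps of the covering balls, so strictly speaking uniqueness should be established before the localization is completed.
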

\begin{proof} We refer the reader, for instance, to the proof of \cite[Theorem 2.8]{p-compl}, which actually applies for the general spatial dimension $N\geq 2$.
\end{proof}

\begin{rem} \label{rem 2.8} A Sobolev function $u \in W^{1,p}(\mathbb{R}^{N})$ belongs to $W^{1,p}_{0}(Y)$ if and only if its $p$-quasi continuous representative $\widetilde{u}$ vanishes $p$-q.e. on $Y^{c}$ (see \cite[Theorem 4]{BAGBY} and \cite[Lemma 4]{Hedberg}). Thus, if $Y^{\prime}$ is an open subset of $Y$ and $u \in W^{1,p}_{0}(Y)$ such that $\widetilde{u}=0$ $p$-q.e. on $Y\backslash Y^{\prime}$, then the restriction of $u$ to $ Y^{\prime}$ belongs to $W^{1,p}_{0}(Y^{\prime})$ and conversely, if we extend a function $u \in W^{1,p}_{0}(Y^{\prime})$ by zero on $Y\backslash Y^{\prime}$, then $u \in W^{1,p}_{0}(Y)$. It is worth mentioning that if $\Sigma \subset \overline{Y}$ and ${\rm Cap}_{p}(\s)=0$, then $W^{1,p}_{0}(Y) = W^{1,p}_{0}(Y \backslash \s)$. Indeed, $u \in W^{1,p}_{0}(Y)$ if and only if $u \in W^{1,p}(\mathbb{R}^{N})$ and $\widetilde{u}=0$ $p$-q.e. on $Y^{c}$ that is equivalent to say $u\in W^{1,p}(\mathbb{R}^{N})$ and $\widetilde{u}=0$ $p$-q.e. on $Y^{c} \cup \s$ (since ${\rm Cap}_{p}(\Sigma)=0$ and ${\rm Cap}_{p}(\cdot)$ is a subadditive set function, see \cite[Proposition 2.3.6]{Potential}) or $u \in W^{1,p}_{0}(Y \backslash \s)$. In the sequel we shall always identify $u \in W^{1,p}(Y)$ with its $p$-quasi continuous representative $\widetilde{u}$.
\end{rem}

\begin{prop} \label{prop 2.9}  Let $D\subset \mathbb{R}^{N}$ be a bounded extension domain and let $u \in W^{1,p}(D)$. Consider the set $E=\overline{D} \cap \{x: u(x)=0\}$. If $\cp_{p}(E)>0$, then there exists a constant $C=C(N,p,D)>0$ such that 
	\[
	\int_{D}|u|^{p}\diff x \leq C ({\rm Cap}_p(E))^{-1} \int_{D} |\nabla u|^{p}\diff x.
	\]
\end{prop}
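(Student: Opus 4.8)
The plan is to establish this Poincaré-type inequality by the standard compactness–contradiction argument, exploiting the $p$-quasicontinuity machinery from Theorem~\ref{thm 2.7} and Remark~\ref{rem 2.8} to control the zero set in the limit. First I would reduce to a normalized setting: suppose the inequality fails, so there is a sequence $(u_{k})\subset W^{1,p}(D)$ with $\int_{D}|u_{k}|^{p}\diff x = 1$ but $({\rm Cap}_{p}(E_{k}))\int_{D}|\nabla u_{k}|^{p}\diff x \to 0$, where $E_{k}=\overline{D}\cap\{u_{k}=0\}$. Since ${\rm Cap}_{p}(E_{k})\leq {\rm Cap}_{p}(\overline{D})$ is bounded above, this forces $\int_{D}|\nabla u_{k}|^{p}\diff x\to 0$, and hence $(u_{k})$ is bounded in $W^{1,p}(D)$. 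Because $D$ is a bounded extension domain, the Rellich–Kondrachov embedding $W^{1,p}(D)\hookrightarrow L^{p}(D)$ is compact, so along a subsequence $u_{k}\to u$ strongly in $L^{p}(D)$ and weakly in $W^{1,p}(D)$, with $\nabla u_{k}\to 0$ in $L^{p}$; thus $\nabla u = 0$, $u$ is (a.e. equal to) a constant $c$ on each connected component of $D$, and $\int_{D}|u|^{p}\diff x = 1$ forces $c\neq 0$ on at least one component.

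The crux is then to derive a contradiction from the constraint ${\rm Cap}_{p}(E_{k})\not\to 0$; more precisely, the failure hypothesis must be arranged to keep ${\rm Cap}_{p}(E_{k})$ bounded below along the subsequence — note that if ${\rm Cap}_{p}(E_{k})\to 0$ the product could go to zero trivially, so one should instead normalize differently. The cleaner route: rescale by setting $v_{k}=u_{k}/\|\nabla u_{k}\|_{L^{p}(D)}$ when $\nabla u_{k}\neq 0$ (the case $\nabla u_{k}=0$ for some $k$ with $E_{k}$ of positive capacity is immediate since then $u_k\equiv 0$ on a component containing capacity, contradicting $u_k$ constant and $E_k$ nonempty there unless $u_k\equiv 0$, handled separately). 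One works with the assumption that $\int_{D}|v_{k}|^{p}\diff x \geq k\,({\rm Cap}_{p}(E_{k}))^{-1}$ and $\int_{D}|\nabla v_{k}|^{p}\diff x = 1$. Writing $w_{k}=v_{k}/\|v_{k}\|_{L^{p}(D)}$ gives $\|w_{k}\|_{L^{p}(D)}=1$ and $\|\nabla w_{k}\|_{L^{p}(D)}^{p}\leq ({\rm Cap}_{p}(E_{k}))/k\to 0$, so as above $w_{k}\to w$ in $L^{p}(D)$ with $w$ locally constant and $\|w\|_{L^{p}(D)}=1$. The zero sets satisfy $\overline{D}\cap\{w_{k}=0\}=E_{k}$.

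I expect the main obstacle to be passing the zero-set information to the limit: I must show that $w$ vanishes on a set of positive $p$-capacity, which then contradicts $w$ being a nonzero constant on each component (a nonzero constant function on a component $D_{j}$ has $\overline{D_{j}}\cap\{w=0\}\subset \partial D_{j}$, and one needs this to have zero capacity relative to the component, i.e.\ one needs $w\not\equiv 0$ to be incompatible with the capacity lower bound surviving). Concretely, after extracting a further subsequence so $w_{k}\to w$ also $p$-q.e.\ (using that $L^{p}$-convergence plus the $W^{1,p}$ bound yields, via Theorem~\ref{thm 2.7} and a standard argument, convergence of $p$-quasicontinuous representatives $p$-q.e.\ along a subsequence), the set where $w\neq 0$ is, up to $p$-capacity zero, contained in $\liminf_{k} \{w_{k}\neq 0\} = \liminf_{k}(\overline{D}\setminus E_{k})$. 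The quantitative input is that ${\rm Cap}_{p}(E_{k})$ cannot tend to zero: if it did along the working subsequence, then since $\|\nabla w_k\|^p_{L^p}\le {\rm Cap}_p(E_k)/k$ we would still need a contradiction, which is supplied by the elementary fact that a function with $\|\nabla w_k\|_{L^p(D)}\to 0$ and $\|w_k\|_{L^p(D)}=1$ converging to a nonzero constant $w$ on some component $D_j$ must have its zero set $E_k\cap \overline{D_j}$ eventually of arbitrarily small capacity — but then apply the hypothesis-free direction: one shows directly that $\int_{D_j}|w_k - c_j|^p \to 0$ (Poincaré on the component) while $w_k = 0$ on $E_k\cap\overline{D_j}$, so ${\rm Cap}_p(E_k\cap\overline{D_j})\to 0$ is forced, and feeding this back the product $({\rm Cap}_p(E_k))^{-1}\int|\nabla w_k|^p$ cannot be $\geq k\|w_k\|^p_{L^p}\to \infty$ unless $\|\nabla v_k\|^p_{L^p}=1$ was mis-normalized. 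The resolution — which is the technical heart of the argument — is to observe that the functional $u\mapsto \int_D|\nabla u|^p\diff x / \int_D |u|^p \diff x$ restricted to $\{u: u=0 \text{ on a fixed closed } F\subset\overline D,\ {\rm Cap}_p(F)\geq \delta\}$ is, by this very compactness argument, bounded below by a positive constant depending only on $N,p,D,\delta$; and crucially that lower bound scales linearly in $\delta$ as $\delta\to 0$, which one gets from the subadditivity and the explicit capacity estimates of Proposition~\ref{prop 2.5} together with a covering argument. I would carry this out by first proving the inequality with $\delta = {\rm Cap}_p(E)$ fixed (pure compactness, no scaling), then promoting the constant's dependence on $\delta$ to the claimed $\delta^{-1}$ rate via a rescaling/covering step; the latter is where the real work lies, and where the hypothesis $p\in(N-1,N)$ versus $p=N$ distinction from Proposition~\ref{prop 2.5} enters.
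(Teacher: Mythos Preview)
The paper does not prove Proposition~\ref{prop 2.9} itself; it simply cites \cite[Corollary~4.5.3, p.~195]{Ziemer}. So the relevant comparison is between your attempt and the standard direct argument found there.

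Your compactness--contradiction scheme has a genuine gap, and you have in fact located it yourself without closing it. A contradiction argument of the kind you sketch can at best yield: for each $\delta>0$ there is $C(\delta)=C(N,p,D,\delta)$ with $\int_{D}|u|^{p}\diff x \le C(\delta)\int_{D}|\nabla u|^{p}\diff x$ whenever ${\rm Cap}_{p}(E)\ge\delta$. Proposition~\ref{prop 2.9} asserts the explicit dependence $C(\delta)=C\,\delta^{-1}$ with $C$ independent of $\delta$, and compactness never produces such a rate. Concretely, in your setup (normalise $\|u_{k}\|_{L^{p}(D)}=1$, assume ${\rm Cap}_{p}(E_{k})>k\int_{D}|\nabla u_{k}|^{p}\diff x$) the bad scenario is ${\rm Cap}_{p}(E_{k})\to 0$ along the whole sequence: then $u_{k}\to c\neq 0$ strongly in $W^{1,p}(D)$ while $u_{k}=0$ $p$-q.e.\ on sets of vanishing capacity, and there is no contradiction. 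Your proposed ``promotion'' via a rescaling/covering step invoking Proposition~\ref{prop 2.5} cannot rescue this: Proposition~\ref{prop 2.5} concerns only the segments $[0,t]\times\{0\}^{N-1}$, and there is no reduction of a general zero set $E\subset\overline{D}$ to such segments that controls both sides of the inequality; moreover $D$ is fixed, so no dilation is available to trade capacity against the Poincar\'e constant. The intermediate renormalisations with $v_{k}$ and $w_{k}$ do not add information---they reparametrise the same obstruction.

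The textbook proof is direct, not compactness-based. One combines the ordinary Poincar\'e inequality $\|u-\bar u\|_{L^{p}(D)}\le C\|\nabla u\|_{L^{p}(D)}$ (valid since $D$ is a bounded extension domain) with a bound on the mean $\bar u$ obtained by \emph{testing} the definition of capacity: since $u=0$ $p$-q.e.\ on $E$, a suitable cut-off of $1-u/\bar u$ (after extension) is admissible for $\widetilde{{\rm Cap}_{p}}(E)$ in the sense recalled after Definition~\ref{def 2.1}, which gives $|\bar u|^{p}\,{\rm Cap}_{p}(E)\le C\|u\|_{W^{1,p}(D)}^{p}$ and hence the $({\rm Cap}_{p}(E))^{-1}$ factor with no limiting procedure.
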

\begin{proof} For a proof, see, for instance, \cite[Corollary 4.5.3, p. 195]{Ziemer}.
	
\end{proof}

\section{Proofs of Theorem~\ref{thm 1.3} and Theorem~\ref{thm 1.4}}
To prove Theorem~\ref{thm 1.3} and Theorem~\ref{thm 1.4}, we need the following two lemmas.
\begin{lemma}\label{lem 3.1} Let $p\in (N-1, +\infty)$, $a \in (0,1),\, \delta>0$ and $u \in W^{1,p}((0,\delta)^{N})$ satisfying $u=0\,\ p$-q.e. on $\bigl[\delta/2-a \delta/2, \delta/2+ a \delta/2\bigr] \times \bigl\{\delta/2\bigr\}^{N-1}$. Then there exists $C=C(N,p)>0$ such that
	\[
	\int_{(0, \delta)^{N}} |u|^{p} \diff x \leq C \delta^{p} ({\rm Cap}_{p}([0, a]\times \{0\}^{N-1}))^{-1} \int_{(0, \delta)^{N}} |\nabla u|^{p}\diff x.
	\]
\end{lemma}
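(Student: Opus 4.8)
\textbf{Proof proposal for Lemma~\ref{lem 3.1}.}

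The plan is to reduce the statement to Proposition~\ref{prop 2.9} by rescaling. First I would set $D=(0,1)^{N}$, which is a bounded extension domain, and introduce the rescaled function $v(y)=u(\delta y)$ for $y\in(0,1)^{N}$. A direct change of variables gives $\int_{(0,1)^{N}}|v|^{p}\diff y=\delta^{-N}\int_{(0,\delta)^{N}}|u|^{p}\diff x$ and $\int_{(0,1)^{N}}|\nabla v|^{p}\diff y=\delta^{p-N}\int_{(0,\delta)^{N}}|\nabla u|^{p}\diff x$, so $v\in W^{1,p}((0,1)^{N})$. The hypothesis $u=0$ $p$-q.e. on $[\delta/2-a\delta/2,\delta/2+a\delta/2]\times\{\delta/2\}^{N-1}$ translates, under $x=\delta y$, into $v=0$ $p$-q.e. on the segment $S_{a}:=[1/2-a/2,1/2+a/2]\times\{1/2\}^{N-1}$, using the fact that sets of $p$-capacity zero are preserved under the bi-Lipschitz map $y\mapsto\delta y$ (this is where I would invoke, if needed, that $\mathrm{Cap}_{p}$ is comparable to $\widetilde{\mathrm{Cap}_{p}}$, which is clearly quasi-invariant under bi-Lipschitz changes of variables with constants depending on $N,p,\delta$; alternatively one notes directly that a $p$-quasi continuous representative composed with a bi-Lipschitz map is again $p$-quasi continuous and the zero set is mapped to the zero set).

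Next I would apply Proposition~\ref{prop 2.9} to $v$ on $D=(0,1)^{N}$ with $E=\overline{D}\cap\{v=0\}$. Since $S_{a}\subset E$ and, by monotonicity of capacity, $\mathrm{Cap}_{p}(E)\geq\mathrm{Cap}_{p}(S_{a})$, and $S_{a}$ is (after an obvious translation, which leaves $\mathrm{Cap}_{p}$ invariant) exactly the segment $[0,a]\times\{0\}^{N-1}$ translated to height $1/2$, we have $\mathrm{Cap}_{p}(S_{a})=\mathrm{Cap}_{p}([0,a]\times\{0\}^{N-1})>0$ by Proposition~\ref{prop 2.5} together with $p\in(N-1,+\infty)$ (for $p>N$ use Remark~\ref{rem 2.3}). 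Hence $\mathrm{Cap}_{p}(E)>0$ and Proposition~\ref{prop 2.9} yields a constant $C_{0}=C_{0}(N,p)$ with
\[
\int_{(0,1)^{N}}|v|^{p}\diff y\leq C_{0}\,(\mathrm{Cap}_{p}(E))^{-1}\int_{(0,1)^{N}}|\nabla v|^{p}\diff y\leq C_{0}\,(\mathrm{Cap}_{p}([0,a]\times\{0\}^{N-1}))^{-1}\int_{(0,1)^{N}}|\nabla v|^{p}\diff y.
\]
Finally I would substitute back the scaling identities: multiplying through by $\delta^{N}$ on the left and using $\delta^{N}\cdot\delta^{p-N}=\delta^{p}$ on the right gives exactly
\[
\int_{(0,\delta)^{N}}|u|^{p}\diff x\leq C_{0}\,\delta^{p}\,(\mathrm{Cap}_{p}([0,a]\times\{0\}^{N-1}))^{-1}\int_{(0,\delta)^{N}}|\nabla u|^{p}\diff x,
\]
which is the claimed inequality with $C=C_{0}(N,p)$.

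The only delicate point is the invariance of the class of $p$-null sets (and of $p$-quasi continuity) under the dilation $y\mapsto\delta y$, so that the boundary vanishing condition transfers correctly to $v$; everything else is a bookkeeping of scaling exponents and an application of Propositions~\ref{prop 2.5} and~\ref{prop 2.9}. I would handle that point by remarking that bi-Lipschitz homeomorphisms of $\mathbb{R}^{N}$ map sets of zero $p$-capacity to sets of zero $p$-capacity (immediate from the variational characterization $\widetilde{\mathrm{Cap}_{p}}$, since precomposition with a bi-Lipschitz map boundedly distorts both $\int|\nabla u|^{p}$ and $\int|u|^{p}$ and preserves the constraint of being $\geq1$ near the image set), and likewise preserve $p$-quasi continuity; applying this to $y\mapsto\delta y$ and its inverse gives the equivalence used above. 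Note that the constant in this capacity comparison depends on $\delta$, but it enters only the verification that $\mathrm{Cap}_{p}(E)>0$ (a qualitative statement) and not the final quantitative estimate, whose $\delta$-dependence is entirely explicit through the scaling of the two integrals.
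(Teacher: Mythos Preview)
Your proof is correct and follows essentially the same approach as the paper: rescale to the unit cube via $v(y)=u(\delta y)$, apply Proposition~\ref{prop 2.9} together with translation invariance and monotonicity of $\mathrm{Cap}_{p}$ to reduce to the segment $[0,a]\times\{0\}^{N-1}$, and scale back. The paper's proof is terser and does not spell out the bi-Lipschitz invariance of $p$-null sets or the positivity of the capacity (it simply invokes Proposition~\ref{prop 2.9} directly), but your additional remarks on those points are accurate and harmless.
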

\begin{proof}Define a function $v \in W^{1,p}((0,1)^{N})$ by $v(\cdot)=u(\delta(\cdot))$. Then we observe that $v=0$ $p$-q.e. on $\bigl[1/2-a/2,1/2+a/2\bigr]\times~\bigl\{1/2\bigr\}^{N-1}$. By Proposition~\ref{prop 2.9} and the fact that the Bessel $(1,p)$-capacity is an invariant under translations and is nondecreasing with respect to set inclusion, there exists a constant $C=C(N,p)>0$ such that 
	\[
	\int_{(0,1)^{N}} |v|^{p} \diff y \leq C ({\rm Cap}_{p}([0, a]\times \{0\}^{N-1}))^{-1} \int_{(0,1)^{N}} |\nabla v|^{p}\diff y.
	\]
	Finally, using the change of variables $x=\delta y$, we recover the desired estimate.
\end{proof}

\begin{lemma}\label{lem 3.2} Let $\Omega\subset \mathbb{R}^{N}$ be open and bounded, $\Sigma$ be a closed proper subset of $\overline{\Omega}$, $p \in (1,+\infty)$ and $f_{1}, f_{2} \in L^{q_{0}}(\Omega)$, where $q_{0}$ is defined in (\ref{Eq 1.1}). Let $z:[0,+\infty)\to [0, +\infty)$ be defined by
	\[
	z(t)=t^{p^{\prime}}\,\ \, \text{if} \,\ 2\leq p<+\infty, \qquad z(t)=\Bigl(\|f_{1}\|^{p^{\prime}}_{L^{q_{0}}(\Omega)}+\|f_{2}\|^{p^{\prime}}_{L^{q_{0}}(\Omega)}\Bigr)^{2-p}t^{p}\,\ \, \text{if}\,\  1<p<2.
	\]
	Then there exists a constant $A=A(p)>0$ such that
	\[
	C_{f_{1},\Omega}(\Sigma)\leq 2^{p-1}C_{f_{2},\Omega}(\Sigma)+ Az(\|f_{1}-f_{2}\|_{L^{q_{0}}(\Omega)}).
	\]
\end{lemma}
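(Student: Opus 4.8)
\textbf{Proof proposal for Lemma~\ref{lem 3.2}.}

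The plan is to compare the two compliances by using the variational characterization of each minimizer as a competitor in the energy for the other source term. Recall that $u_{i}:=u_{f_{i},\Omega,\Sigma}\in W^{1,p}_{0}(\Omega\backslash\Sigma)$ satisfies the weak Euler--Lagrange equation with test functions in $W^{1,p}_{0}(\Omega\backslash\Sigma)$, and that $C_{f_{i},\Omega}(\Sigma)=-E_{f_{i},\Omega}(u_{i})=\tfrac1{p'}\int_{\Omega}|\nabla u_{i}|^{p}\diff x$. First I would record the elementary inequality (from minimality of $u_{1}$ tested against the admissible competitor $u_{2}$, or equivalently from $C_{f_{1},\Omega}(\Sigma)=-\min E_{f_{1},\Omega}\le -E_{f_{1},\Omega}(u_{2})$) and then rewrite $-E_{f_{1},\Omega}(u_{2})=-E_{f_{2},\Omega}(u_{2})+\int_{\Omega}(f_{2}-f_{1})u_{2}\diff x=C_{f_{2},\Omega}(\Sigma)+\int_{\Omega}(f_{2}-f_{1})u_{2}\diff x$. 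This already gives
\[
C_{f_{1},\Omega}(\Sigma)\le C_{f_{2},\Omega}(\Sigma)+\int_{\Omega}(f_{2}-f_{1})u_{2}\diff x,
\]
so the task reduces to estimating the cross term $\int_{\Omega}(f_{2}-f_{1})u_{2}\diff x$.

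The key estimate on the cross term comes from the Sobolev embedding packaged into the choice of $q_{0}$ in \eqref{Eq 1.1}: in each of the three regimes ($1<p<N$, $p=N$, $p>N$) one has, for $w\in W^{1,p}_{0}(\Omega)$, a bound $\|w\|_{L^{q_{0}'}(\Omega)}\le S\|\nabla w\|_{L^{p}(\Omega)}$ with $S=S(N,p,\Omega)$ (for $p<N$ this is the sharp Sobolev inequality with $q_{0}'=p^{*}$; for $p\ge N$ it follows from the embedding into $L^{r}$ for all finite $r$, resp. into $C^{0}$, on the bounded set $\Omega$). Hence by Hölder,
\[
\Bigl|\int_{\Omega}(f_{2}-f_{1})u_{2}\diff x\Bigr|\le \|f_{1}-f_{2}\|_{L^{q_{0}}(\Omega)}\,\|u_{2}\|_{L^{q_{0}'}(\Omega)}\le S\,\|f_{1}-f_{2}\|_{L^{q_{0}}(\Omega)}\,\|\nabla u_{2}\|_{L^{p}(\Omega)}.
\]
Writing $\|\nabla u_{2}\|_{L^{p}(\Omega)}=\bigl(p'\,C_{f_{2},\Omega}(\Sigma)\bigr)^{1/p}$, the cross term is controlled by $c\,\|f_{1}-f_{2}\|_{L^{q_{0}}(\Omega)}\,C_{f_{2},\Omega}(\Sigma)^{1/p}$.

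It remains to absorb $C_{f_{2},\Omega}(\Sigma)^{1/p}$ using Young's inequality $ab\le \varepsilon a^{p}+C(\varepsilon)b^{p'}$ with exponents $p,p'$: taking $a$ proportional to $C_{f_{2},\Omega}(\Sigma)^{1/p}$ and $b$ proportional to $\|f_{1}-f_{2}\|_{L^{q_{0}}(\Omega)}$, and choosing $\varepsilon$ so that the $C_{f_{2},\Omega}(\Sigma)$ term carries coefficient at most $2^{p-1}-1$ (recall $2^{p-1}\ge 1$ for $p\ge 2$ but could be $<1$ for $p<2$, so a little care with the constant is needed), yields $C_{f_{1},\Omega}(\Sigma)\le 2^{p-1}C_{f_{2},\Omega}(\Sigma)+A\,\|f_{1}-f_{2}\|_{L^{q_{0}}(\Omega)}^{p'}$ in the range $p\ge 2$. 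For $1<p<2$ the stated weight $z(t)=\bigl(\|f_{1}\|_{L^{q_{0}}}^{p'}+\|f_{2}\|_{L^{q_{0}}}^{p'}\bigr)^{2-p}t^{p}$ appears because one must first bound $C_{f_{2},\Omega}(\Sigma)^{1/p}$ crudely; indeed from the equation for $u_{2}$ one gets $\|\nabla u_{2}\|_{L^{p}(\Omega)}^{p}\le S\|f_{2}\|_{L^{q_{0}}(\Omega)}\|\nabla u_{2}\|_{L^{p}(\Omega)}$, hence $\|\nabla u_{2}\|_{L^{p}(\Omega)}\le S\|f_{2}\|_{L^{q_{0}}(\Omega)}^{1/(p-1)}$ and $C_{f_{2},\Omega}(\Sigma)^{1/p}\le c\,\|f_{2}\|_{L^{q_{0}}(\Omega)}^{p'/p}=c\,\|f_{2}\|_{L^{q_{0}}(\Omega)}^{p'-1}$; multiplying the cross-term bound $c\|f_{1}-f_{2}\|_{L^{q_{0}}}\,C_{f_{2},\Omega}(\Sigma)^{1/p}$ by an appropriate split between this crude bound (to a fractional power) and a Young step reproduces the exponent $p$ on $t=\|f_{1}-f_{2}\|_{L^{q_{0}}}$ together with the factor $\bigl(\|f_{1}\|_{L^{q_{0}}}^{p'}+\|f_{2}\|_{L^{q_{0}}}^{p'}\bigr)^{2-p}$. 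The main obstacle is bookkeeping the constants so that the coefficient of $C_{f_{2},\Omega}(\Sigma)$ is exactly $2^{p-1}$ (not merely some constant) uniformly in both regimes, and making sure the Sobolev constant $S$ is genuinely independent of $\Sigma$ — which it is, since $W^{1,p}_{0}(\Omega\backslash\Sigma)\subset W^{1,p}_{0}(\Omega)$ and the embedding constant for the larger space does the job. Everything else is a routine application of Hölder and Young.
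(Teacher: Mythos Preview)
Your starting inequality is reversed. You claim $C_{f_{1},\Omega}(\Sigma)=-\min E_{f_{1},\Omega}\le -E_{f_{1},\Omega}(u_{2})$, but minimality of $u_{1}$ reads $E_{f_{1},\Omega}(u_{1})\le E_{f_{1},\Omega}(u_{2})$, hence after negation $C_{f_{1},\Omega}(\Sigma)=-E_{f_{1},\Omega}(u_{1})\ge -E_{f_{1},\Omega}(u_{2})$. The compliance is the \emph{maximum} of $-E_{f_{1},\Omega}$ over $W^{1,p}_{0}(\Omega\backslash\Sigma)$, so testing a competitor only yields a lower bound for it, which is useless here.

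The repair is to test the other way: insert $u_{1}$ as a competitor in $E_{f_{2},\Omega}$. Then $C_{f_{2},\Omega}(\Sigma)\ge -E_{f_{2},\Omega}(u_{1})$, and using the Euler--Lagrange identity $\int_{\Omega}|\nabla u_{1}|^{p}\diff x=\int_{\Omega}f_{1}u_{1}\diff x$ one computes $-E_{f_{2},\Omega}(u_{1})=C_{f_{1},\Omega}(\Sigma)+\int_{\Omega}(f_{2}-f_{1})u_{1}\diff x$, hence
\[
C_{f_{1},\Omega}(\Sigma)\le C_{f_{2},\Omega}(\Sigma)+\int_{\Omega}(f_{1}-f_{2})\,u_{1}\diff x.
\]
Now the cross term carries $u_{1}$, not $u_{2}$; your H\"older--Sobolev step gives $\bigl|\int_{\Omega}(f_{1}-f_{2})u_{1}\diff x\bigr|\le S\,\|f_{1}-f_{2}\|_{L^{q_{0}}}\,(p'\,C_{f_{1},\Omega}(\Sigma))^{1/p}$, and Young's inequality absorbs $C_{f_{1},\Omega}(\Sigma)$ into the \emph{left}-hand side. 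Choosing $\varepsilon=1-2^{1-p}\in(0,1)$ produces $C_{f_{1},\Omega}(\Sigma)\le 2^{p-1}C_{f_{2},\Omega}(\Sigma)+A\,\|f_{1}-f_{2}\|_{L^{q_{0}}}^{p'}$ for all $p>1$, so your separate (and, as written, only heuristic) treatment of the range $1<p<2$ is unnecessary.

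By contrast, the paper does not use this variational comparison at all: it quotes an external stability estimate for the $p$-Laplacian giving $\int_{\Omega}|\nabla u_{1}-\nabla u_{2}|^{p}\diff x\le C\,z(\|f_{1}-f_{2}\|_{L^{q_{0}}})$ with $C=C(p)$, and then applies the elementary inequality $(a+b)^{p}\le 2^{p-1}(a^{p}+b^{p})$ pointwise to $|\nabla u_{1}|\le |\nabla u_{2}|+|\nabla u_{1}-\nabla u_{2}|$. The particular shape of $z$ for $1<p<2$ (the prefactor $(\|f_{1}\|^{p'}+\|f_{2}\|^{p'})^{2-p}$ and the exponent $t^{p}$) is inherited from that cited stability result, not from any computation internal to the lemma. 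Your corrected variational route is more self-contained, but its constant $A$ picks up the Sobolev embedding constant and therefore depends on $N$ (and, for $p\ge N$, on $\Omega$) rather than on $p$ alone as stated.
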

\begin{proof} According to \cite[Theorem 2.3]{ABC}, there exists $C=C(p)>0$ such that
	\[
	\int_{\Omega}|\nabla u_{f_{1},\Omega,\Sigma}-\nabla u_{f_{2},\Omega, \Sigma}|^{p}\diff x \leq Cz(\|f_{1}-f_{2}\|_{L^{q_{0}}(\Omega)}).
	\]
	Since for any nonnegative numbers $c$ and $d$, $(c+d)^{p}\leq 2^{p-1}(c^{p}+d^{p})$, we deduce that 
	\begin{align*}
	\frac{1}{p^{\prime}}\int_{\Omega}|\nabla u_{f_{1},\Omega,\Sigma}|^{p}\diff x &\leq \frac{2^{p-1}}{p^{\prime}}\int_{\Omega}|\nabla u_{f_{2},\Omega,\Sigma}|^{p}\diff x + \frac{2^{p-1}}{p^{\prime}}\int_{\Omega}|\nabla u_{f_{1},\Omega, \Sigma}-\nabla u_{f_{2}, \Omega, \Sigma}|^{p}\diff x\\
	&\leq \frac{2^{p-1}}{p^{\prime}}\int_{\Omega}|\nabla u_{f_{2},\Omega,\Sigma}|^{p}\diff x+ 2^{p-1}Cz(\|f_{1}-f_{2}\|_{L^{q_{0}}(\Omega)}).
	\end{align*}
	Thus, defining $A=2^{p-1}C$, we complete the proof of Lemma~\ref{lem 3.2}.
\end{proof}
\begin{proof}[Proof of Theorem~\ref{thm 1.3}] Since $\Omega$ is bounded, there exists $R>0$ such that $\Omega$ is contained in the $N$-cube $Q=(-R,R)^{N}$.\\
	\textit{Step 1.} We start by proving that for any $g \in L^{p^{\prime}}(Q)$, $\inf \bigl\{ \mathcal{F}_{\lambda,g,\Omega}(\Sigma): \Sigma \subsetneqq \overline{\Omega}\,\ \text{is closed}\bigr\}=0.$
	First of all, notice that $\inf\bigl\{\mathcal{F}_{\lambda, g, \Omega}(\Sigma): \Sigma\subsetneqq \overline{\Omega}\,\ \text{is closed}\bigr\}$ is equal to 
	\begin{equation}\label{3.1}
	\inf \biggl\{ \frac{1}{p^{\prime}} \int_{\Omega} |\sigma|^{p^{\prime}} \diff x +  \lambda \mathcal{H}^{1}(\Sigma): \Sigma \subsetneqq \overline{\Omega} \,\ \text{is closed,}\,\ \sigma \in L^{p^{\prime}}(\Omega; \mathbb{R}^{N}), -div(\sigma)=g \,\ \text{in} \,\ \mathcal{D}^{\prime}(\Omega\backslash \Sigma)\biggr\},
	\end{equation}
	which is a direct consequence of \cite[Lemma A.3]{p-compl}. Fix an arbitrary $\varepsilon \in (0,1)$. We construct a sequence $\{(\sigma_{n},S_{n})\}_{n\in \mathbb{N}^{*}}$ of admissible pairs for problem~(\ref{3.1}) such that
	\[
	\limsup_{n\to+\infty}\biggl(\frac{1}{p^{\prime}}\int_{\Omega} |\sigma_{n}|^{p^{\prime}}\diff x+\lambda \mathcal{H}^{1}(S_{n})\biggr) \leq \lambda 2^{N}R\varepsilon.
	\]
	For each integer $n\geq 1$ and each point $\xi^{n}$ in the set~$\bigl\{jR/n : j \in \bigl\{-n,...,n-1 \bigr\} \bigr\}^{N}$, we define the open ``local $N$-cube" $Q(\xi^{n})\subset Q$ by
	\[
	Q(\xi^{n})=\Biggl(\xi^{n}+\biggl(0,\frac{R}{n}\biggr)^{N}\Biggr),
	\]
	the ``crack'' set $S(\xi^{n})\subset Q(\xi^{n})$ by
	\[
	S(\xi^{n})=\Biggl(\xi^{n}+\biggl[\frac{R}{2n} -\frac{\varepsilon R}{2 n^{N}}, \frac{R}{2n} + \frac{\varepsilon R}{2 n^{N}}\biggr] \times \biggl\{\frac{R}{2n}\biggr\}^{N-1}\Biggr)
	\]
	(see Figure \ref{Figure grid}), and the space $W_{\xi^{n}}$ consisting of the Sobolev functions $w \in W^{1,p}(Q(\xi^{n}))$ vanishing $p$-q.e. on $S(\xi^{n})$, that is,
	\[
	W_{\xi^{n}}=\bigl\{w \in W^{1,p}(Q(\xi^{n})) : w=0 \,\ p \text{-q.e. on}\;\ S(\xi^{n})\bigr\}.
	\] 
	\begin{figure}
		\centering
		\includegraphics[width=.5\textwidth]{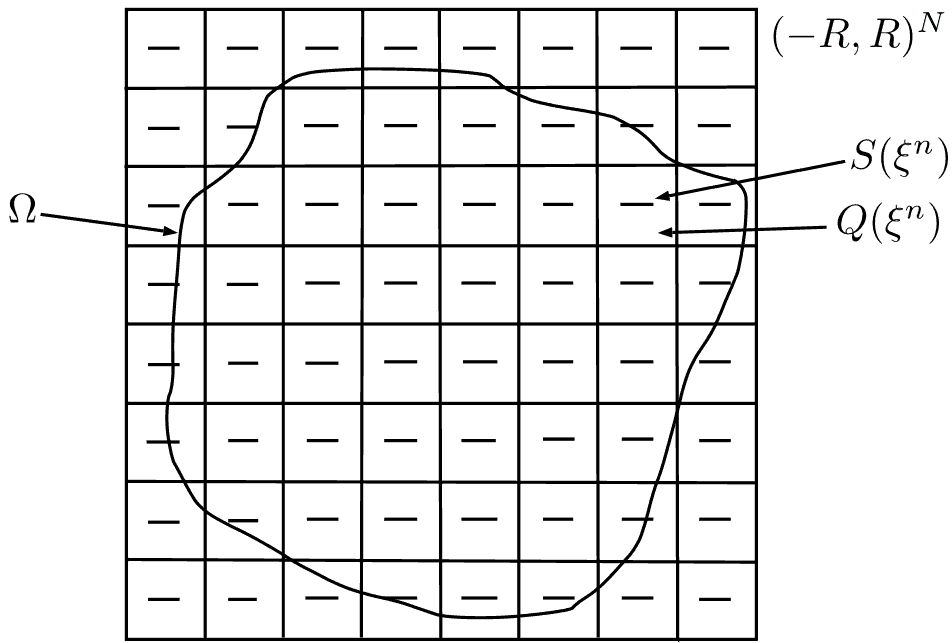}
		\caption{Construction of the $N$-cubes $Q(\xi^{n})$ and the crack sets $S(\xi^{n})$ in the proof of Theorem~\ref{thm 1.3}.}
		\label{Figure grid}
	\end{figure}
	Notice that $W_{\xi^{n}}$ is closed in $W^{1,p}(Q(\xi^{n}))$. Indeed, assume that $(w_{m})_{m\in \mathbb{N}}\subset W_{\xi^{n}}$ and $w_{m}\to w$ in $W^{1,p}(Q(\xi^{n}))$. We fix a function $\chi \in C^{\infty}_{0}(Q(\xi^{n}))$ such that $\chi=1$ on $S(\xi^{n})$. For each $m\in \mathbb{N},$ we have $w_{m}\chi \in W^{1,p}(Q(\xi^{n}))$ and $w_{m}\chi =0$ $p$-q.e. on $(Q(\xi^{n}))^{c}\cup S(\xi^{n})$. Then, according to Remark~\ref{rem 2.8}, $(w_{m}\chi)_{m\in \mathbb{N}}\subset W^{1,p}_{0}(Q(\xi^{n})\backslash S(\xi^{n}))$. In addition, $w_{m}\chi \to w \chi$ in $W^{1,p}_{0}(Q(\xi^{n})\backslash S(\xi^{n}))$ and hence $w\chi=0$ $p$-q.e. on $(Q(\xi^{n}))^{c}\cup S(\xi^{n})$ (see Remark~\ref{rem 2.8}). But this implies that $w=0$ $p$-q.e. on $S(\xi^{n})$ and therefore $w \in W_{\xi_{n}}$. For convenience, we define the set  
	\[
	A_{\varepsilon,n}=\biggl[0, \frac{\varepsilon}{n^{N-1}}\biggr]\times \biggl\{0\biggr\}^{N-1}.
	\]
	Next, changing variables if necessary and applying Lemma~\ref{lem 3.1} with $a=\varepsilon/n^{N-1}$ and $\delta=R/n$, we deduce that for each $w \in W_{\xi^{n}}$, the inequality
	\[
	\int_{Q(\xi^{n})} |w|^{p}\diff x \leq \frac{C}{n^{p}} ({\rm Cap}_{p}(A_{\varepsilon,n}))^{-1} \int_{Q(\xi^{n})} |\nabla w|^{p}\diff x \numberthis \label{3.2}
	\]
	holds for some $C=C(N,p,R)>0$. Since $(W_{\xi^{n}},\|\cdot\|_{W^{1,p}(Q(\xi^{n}))})$ is a reflexive Banach space and the functional $F_{\xi^{n}}:W_{\xi^{n}}\to \mathbb{R}$ defined by
	\[
	F_{\xi^{n}}(w)=\frac{1}{p}\int_{Q(\xi^{n})} |\nabla w|^{p}\diff x - \int_{Q(\xi^{n})} gw \diff x
	\]
	is lower semicontinuous, coercive (thanks to (\ref{3.2})) and strictly convex, using the direct method in the Calculus of Variations, we deduce that $F_{\xi^{n}}$ admits a unique minimizer $u_{\xi^{n}}$ over $W_{\xi^{n}}$. It should be noted that the same is true in the case when $g \in L^{q_{0}}(Q)$ thanks to the Sobolev embeddings, but we shall use, in particular, the fact that $g \in 
	L^{p^{\prime}}(Q)$ to derive some nice estimates below. It follows from the minimality of $u_{\xi^{n}}$ that 
	\[
	\lim_{t\to 0+}\frac{1}{t}(F_{\xi^{n}}(u_{\xi^{n}}+tw)-F_{\xi^{n}}(u_{\xi^{n}}))\geq 0 \,\ \,\ \text{and} \,\ \,\ \lim_{t\to 0+}\frac{1}{t}(F_{\xi^{n}}(u_{\xi^{n}}-tw)-F_{\xi^{n}}(u_{\xi^{n}}))\geq 0  \,\ \,\ \forall w \in W_{\xi^{n}}.
	\] This implies that 
	\[
	\int_{Q(\xi^{n})} |\nabla u_{\xi^{n}}|^{p-2} \nabla u_{\xi^{n}} \nabla w \diff x = \int_{Q(\xi^{n})} g w \diff x \qquad \forall w \in W_{\xi^{n}} \numberthis \label{3.3}
	\]
	 and, in particular,
	\[
	\int_{Q(\xi^{n})} |\nabla u_{\xi^{n}}|^{p} \diff x =  \int_{Q(\xi^{n})} g u_{\xi^{n}} \diff x. 
	\]
	Applying H\"{o}lder's inequality to the right-hand side of the latter formula and then the inequality (\ref{3.2}) to $u_{\xi^{n}}$, we obtain that
	\[
	\int_{Q(\xi^{n})} |\nabla u_{\xi^{n}}|^{p}\diff x\leq \frac{C^{\frac{1}{p}}}{n}({\rm Cap}_{p}(A_{\varepsilon,n}))^{-\frac{1}{p}}\Biggl(\int_{Q(\xi^{n})}|g|^{p^{\prime}}\diff x \Biggr)^{\frac{1}{p^{\prime}}}\Biggl(\int_{Q(\xi^{n})}|\nabla u_{\xi^{n}}|^{p}\diff x \Biggr)^{\frac{1}{p}}
	\]
	and hence
	\begin{equation}\label{3.4}
	\int_{Q(\xi^{n})} |\nabla u_{\xi^{n}}|^{p} \diff x  \leq \frac{\widetilde{C}}{n^{p^{\prime}}} ({\rm Cap}_{p}(A_{\varepsilon,n}))^{1-p^{\prime}} \int_{Q(\xi^{n})} |g|^{p^{\prime}} \diff x,
	\end{equation}
	where $\widetilde{C}=\widetilde{C}(N,p,R)>0$. Here we have used the fact that $g \in L^{p^{\prime}}(Q)$. For each $n\in\mathbb{N}^{*}$, let $\sigma_{n} \in L^{p^{\prime}}(Q; \mathbb{R}^{N})$ be defined as follows
	\[ 
	\left.\sigma_{n} \right |_{Q(\xi^{n})}:= |\nabla u_{\xi^{n}}|^{p-2} \nabla u_{\xi^{n}} \,\ \text{for each}\,\ Q(\xi^{n}) \subset Q.
	\]
	Also, for each $n\in \mathbb{N}^{*}$, define the compact set ${\s}_{n} \subset Q$ by $\Sigma_{n}:= \bigcup S(\xi^{n}),$ where the union is taken over all $S(\xi^{n})$ in $Q$. Then 
	\begin{align*}
	\int_{Q} |\sigma_{n}|^{p^{\prime}}\diff x  = \sum \int_{Q(\xi^{n})} |\nabla u_{\xi^{n}}|^{p}\diff x 
	&  \stackrel{(\ref{3.4})}{\leq} \frac{\widetilde{C}}{n^{p^{\prime}}} ({\rm Cap}_{p}(A_{\varepsilon,n}))^{1-p^{\prime}} \sum \int_{Q(\xi^{n})} |g|^{p^{\prime}}\diff x  \\
	& = \frac{\widetilde{C}}{n^{p^{\prime}}} ({\rm Cap}_{p}(A_{\varepsilon,n}))^{1-p^{\prime}} \int_{Q}|g|^{p^{\prime}}\diff x, \numberthis \label{3.5}
	\end{align*}
	where the summations are taken over all $N$-cubes $Q(\xi^{n}) \subset Q$. On the other hand,
	\[
	\mathcal{H}^{1}(\Sigma_{n})= 2^{N}R \varepsilon. \numberthis \label{3.6}
	\]
	Let us now fix an arbitrary function $\varphi \in C^{\infty}_{0}(\Omega\backslash {\s}_{n})$. We extend $\varphi$ by zero outside $\Omega\backslash \Sigma_{n}$ and keep the same notation for this extension. It is clear that the restriction of $\varphi$ on each $Q(\xi^{n})\subset Q$ belongs to $W_{\xi^{n}}$. Then, using the optimality condition (\ref{3.3}) and the fact that $\varphi=0$ on $\Omega^{c}\cup \Sigma_{n}$, we get
	\begin{align*}
	\int_{\Omega} \sigma_{n} \nabla \varphi \diff x = \sum \int_{Q(\xi^{n})} |\nabla u_{\xi^{n}}|^{p-2} \nabla u_{\xi^{n}} \nabla \varphi \diff x =  \sum \int_{Q(\xi^{n})} g  \varphi \diff x =  \int_{\Omega} g \varphi \diff x,
	\end{align*}
	where the summations are taken over all $N$-cubes $Q(\xi^{n}) \subset Q$. This implies that
	\[
	-div(\sigma_{n})=g \,\ \text{in}\,\ \mathcal{D^{\prime}}(\Omega\backslash {\s}_{n}).
	\]
	So for each $n\in\mathbb{N}^{*}$, defining $S_{n}$ by $S_{n}=\Sigma_{n}\cap\overline{\Omega}$, we observe that $(\sigma_{n}, S_{n})$ is an admissible pair for problem (\ref{3.1}). Let us consider the next three cases.\\
	\textit{Case 1: $p\in (N-1,N)$}. By Proposition~\ref{prop 2.5} $(i)$ applied with $t=\varepsilon/n^{N-1}$, there exists $C_{0}=C_{0}(N,p)>0$ such that
	\[
	\frac{\varepsilon^{N-p}}{n^{(N-1)(N-p)}} \leq C_{0} {\rm Cap}_{p}(A_{\varepsilon,n})
	\]
	and hence
	\[
	({\rm Cap}_{p}(A_{\varepsilon,n}))^{1-p^{\prime}} \leq C n^{(N-1)(N-p)(p^{\prime}-1)}
	\]
	for some $C=C(\varepsilon, p, N)>0$. Since $p\in (N-1,N)$, we observe that $(N-1)(N-p)(p^{\prime}-1)<p^{\prime}$.\\
	\textit{Case 2: $p=N.$} By Proposition~\ref{prop 2.5} $(ii)$ applied with $t=\varepsilon/n^{N-1}$, there exists $C_{0}=C_{0}(N)>0$ such that
	\[
	\biggr(\log\biggr(\frac{C_{0}n^{N-1}}{\varepsilon}\biggl)\biggl)^{1-p} \leq C_{0} {\rm Cap}_{p}(A_{\varepsilon,n})
	\]
	and hence
	\[
	({\rm Cap}_{p}(A_{\varepsilon,n}))^{1-p^{\prime}} \leq C \log (Cn)
	\]
	for some $C=C(\varepsilon, N)>0$.\\
	\textit{Case 3: $p>N$.} In this case, by Remark~\ref{rem 2.3}, there exists $C=C(N,p)>0$ such that 
	\[
	({\rm Cap}_{p}(A_{\varepsilon,n}))^{1-p^{\prime}} \leq C.
	\]
	Thus, returning to the estimate (\ref{3.5}), we can now conclude that for any fixed $p>N-1$ there exists a nonnegative function $\psi$, defined on $(0, +\infty)$, such that $\psi(n) \to 0$ as $n\to +\infty$ and
	\[
	\int_{Q} |\sigma_{n}|^{p^{\prime}}\diff x \leq \psi(n).
	\]
	Letting $n$ tend to $+\infty$ in the above estimate, taking into account the fact that $\mathcal{H}^{1}(S_{n})\leq \mathcal{H}^{1}(\Sigma_{n})$ and the estimate $(\ref{3.6})$, we get that the infimum in problem (\ref{3.1}) is less than or equal to $\lambda2^{N}R\varepsilon$ and hence
	\[
	\inf \bigl\{\mathcal{F}_{\lambda,g,\Omega}(\s): \Sigma \subsetneqq \overline{\Omega}\,\ \text{is closed}\bigr\} \leq \lambda 2^{N} R \varepsilon.
	\]
	As $\varepsilon \in (0,1)$ was arbitrarily chosen,
	\[
	\inf \bigl\{\mathcal{F}_{\lambda,g,\Omega}(\s): \Sigma \subsetneqq \overline{\Omega}\,\ \text{is closed}\bigr\}=0.
	\]
	\textit{Step 2.} We prove that $\inf\bigl\{\mathcal{F}_{\lambda,f,\Omega}(\Sigma): \s \subsetneqq \overline{\Omega}\,\ \text{is closed}\bigr\}=0$. Let us fix a sequence $(f_{m})_{m\in\mathbb{N}}\subset L^{p^{\prime}}(Q)$ such that $f_{m}\to f$ in $L^{q_{0}}(\Omega)$.
	Next, using Lemma~\ref{lem 3.2} and if $p\in (1,2)$, then using also the fact that the sequence $(\|f_{m}\|_{L^{q_{0}}(\Omega)})_{m\in \mathbb{N}}$ is bounded, we deduce that there exist a constant $A=A(p)>0$ and a nonnegative function $z\in C^{0}([0,+\infty))$ satisfying $z(0)=0$ such that for all $m\in \mathbb{N}$,
	\[
	\inf_{\Sigma \subsetneqq \overline{\Omega}\, \text{is closed}}\mathcal{F}_{\lambda,f,\Omega}(\Sigma)\leq 2^{p-1}\inf_{\Sigma \subsetneqq \overline{\Omega}\, \text{is closed}}\mathcal{F}_{\lambda,f_{m},\Omega}(\Sigma)+Az(\|f-f_{m}\|_{L^{q_{0}}(\Om)})=Az(\|f-f_{m}\|_{L^{q_{0}}(\Om)}),
	\]
	where we have used the result of \textit{Step 1}. Next, letting $m$ tend to $+\infty$ in the above estimate, we deduce that $\inf\bigl\{\mathcal{F}_{\lambda,f,\Omega}(\Sigma): \s \subsetneqq \overline{\Omega}\,\ \text{is closed}\bigr\}=0$. \\
	\textit{Step 3.} Assume by contradiction that there is a solution $\widetilde{\Sigma}$ to the problem \[\inf\bigl\{\mathcal{F}_{\lambda,f,\Omega}(\Sigma): \s \subsetneqq \overline{\Omega} \,\ \text{is closed}\bigr\}.\] From \textit{Step 2} it follows that $C_{f,\Omega}(\widetilde{\Sigma})=\mathcal{H}^{1}(\widetilde{\Sigma})=0$. Then $u_{f,\Omega,\widetilde{\Sigma}}=0$ as an element of $W^{1,p}(\Omega)$. By the minimality of $u_{f,\Omega,\widetilde{\Sigma}}$ (recall that $u_{f,\Omega,\widetilde{\Sigma}}$ is a unique minimizer of $E_{f,\Omega}$ over $W^{1,p}_{0}(\Omega\backslash \widetilde{\Sigma})$) and the fact that $u_{f,\Omega,\widetilde{\Sigma}}=0$,  
	\[
      0=\lim_{t\to 0+}\frac{1}{t}\Bigl(E_{f,\Omega}\Bigl(u_{f,\Omega,\widetilde{\Sigma}}+t\zeta\Bigr)-E_{f,\Omega}\Bigl(u_{f,\Omega,\widetilde{\Sigma}}\Bigr)\Bigr)=\lim_{t\to 0+}\frac{1}{t}E_{f,\Omega}(t\zeta)=-\int_{\Omega} f \zeta \diff x\,\ \,\ \forall \zeta \in C^{\infty}_{0}(\Omega \backslash \widetilde{\Sigma}),
	\]
	which implies that $f=0$ a.e. in $\Omega$ and leads to a contradiction. This completes our proof of Theorem~\ref{thm 1.3}.
\end{proof}
\begin{proof}[Proof of Theorem \ref{thm 1.4}] There exists $R>0$ such that $\Omega \subset (-R,R)^{N}$. Fix an arbitrary $g \in L^{p^{\prime}}((-R,R)^{N})$. This is a direct consequence of \cite[Lemma A.3]{p-compl} that $\inf\bigl\{C_{g,\Omega}(\Sigma): \Sigma \subset\overline{\Omega}\,\ \text{is closed}, \,\ 0<\mathcal{H}^{1}(\Sigma)\leq L\bigr\}$ is equal to
	\begin{equation*}
	\inf\biggl\{\frac{1}{p^{\prime}}\int_{\Omega}|\sigma_{\Sigma}|^{p^{\prime}}\diff x: \Sigma \subset \overline{\Omega}\,\ \text{is closed},\,\ 0<\mathcal{H}^{1}(\Sigma)\leq L,\,\ \sigma_{\Sigma}\in L^{p^{\prime}}(\Omega;\mathbb{R}^{N}),-div(\sigma_{\Sigma})=g\,\ \text{in}\,\ \mathcal{D}^{\prime}(\Omega\backslash \Sigma) \biggr\}. 
	\end{equation*}
Then, proceeding in the same way as in \textit{Step 1} in the proof of Theorem~\ref{thm 1.3}, we can construct a sequence $(\sigma_{\Sigma_{n}})_{n\in \mathbb{N}^{*}} \subset L^{p^{\prime}}(\Omega; \mathbb{R}^{N})$ such that for each $n\in \mathbb{N}^{*}$, $\Sigma_{n}\subset \overline{\Omega}$ is closed, $0<\mathcal{H}^{1}(\Sigma_{n})\leq L$, $-div(\sigma_{\Sigma_{n}})=g$ in $\mathcal{D}^{\prime}(\Omega\backslash \Sigma_{n})$ and, in addition,
\[
\lim_{n\to +\infty}\int_{\Omega}|\sigma_{\Sigma_{n}}|^{p^{\prime}}\diff x=0.
\]
Thus $\inf\bigl\{C_{g,\Omega}(\Sigma): \Sigma \subset\overline{\Omega}\,\ \text{is closed},\,\ 0<\mathcal{H}^{1}(\Sigma)\leq L\bigr\}=0.$ Now let $(f_{m})_{m\in\mathbb{N}}\subset L^{p^{\prime}}((-R,R)^{N})$ be a sequence such that $f_{m}\to f \,\ \text{in}\,\ L^{q_{0}}(\Omega)$. We already know that for each $m\in\mathbb{N}$, \[\inf\bigl\{C_{f_{m},\Omega}(\Sigma): \Sigma \subset\overline{\Omega}\,\ \text{is closed},\,\ 0<\mathcal{H}^{1}(\Sigma)\leq L\bigr\}=0.\] This, together with Lemma~\ref{lem 3.2} and the fact that $\|f-f_{m}\|_{L^{q_{0}}(\Omega)}\to 0$ as $m\to +\infty$, implies that 
\[
\inf\bigl\{C_{f,\Omega}(\Sigma): \Sigma \subset\overline{\Omega}\,\ \text{is closed}, \,\ 0<\mathcal{H}^{1}(\Sigma)\leq L\bigr\}=0.
\]
Suppose now by contradiction that there is a solution $\widetilde{\Sigma}$ to the above problem. Since $C_{f,\Omega} (\widetilde{\Sigma})=0$, we have $u_{f,\Omega,\widetilde{\Sigma}}=0$ as an element of $W^{1,p}(\Omega)$. Then, using the minimality of $u_{f,\Omega,\widetilde{\Sigma}}$, we deduce that 
\[
\int_{\Omega}f\zeta\diff x=0 \qquad \forall \zeta \in C^{\infty}_{0}(\Omega\backslash \widetilde{\Sigma}),
\]
which implies that $f=0$ a.e. in $\Omega$ and leads to a contradiction. This completes the proof of Theorem~\ref{thm 1.4}.
\end{proof}

\textit{Acknowledgments.} I thank Antoine Lemenant for suggesting that I work on this problem, for his support and valuable comments on the manuscript. Also I am grateful to Antonin Chambolle for fruitful discussions. This work was partially supported by the project ANR-18-CE40-0013 SHAPO financed by the French Agence Nationale de la Recherche (ANR).


\bibliography{bib1}
\bibliographystyle{plain}

\end{document}